\newtheorem{thm}{Theorem}[section]
\newtheorem{lem}[thm]{Lemma}
\newtheorem{pro}[thm]{Proposition}
\newtheorem{df}[thm]{Definition}
\theoremstyle{remark}
\newtheorem{rmk}[thm]{Remark}
\newtheorem{ex}[thm]{Example}
\newtheorem{obs}[thm]{Observation}
\newtheorem{conj}[thm]{Conjecture}
\newcommand{\beq}{\begin{equation} }
\newcommand{\enq}{\end{equation}}
\newcommand{\mP}{\mathbb{P}}
\newcommand{\caI}{{\cal I}}
\DeclareMathOperator{\rk}{\rm rk}
\DeclareMathOperator{\sing}{\rm sing}
\newcommand{\PP}{\mathbb P}
\newcommand{\CC}{{\mathbb C}}
\newcommand{\XXf}{{S}}
\newcommand{\LLL}{\ell}
\newcommand{\qqq}{{\mathfrak b}}
\newcommand{\sLLL}{\ell}
\title{Counting Lines on Quartic Surfaces}
\author{V\'ictor Gonz\'alez-Alonso\thanks{Funding by projects ERC StG 279723 (SURFARI) and MTM2012-38122-C03-01/FEDER (of the Spanish ``Ministerio de Econom\'{\i}a y Competitividad'') is gratefully acknowledged.} \,\, and  S\l awomir Rams}
\date{}
\numberwithin{equation}{section}
\begin{document}

\maketitle
\begin{abstract}
We prove the sharp bound of at most 64 lines
on projective quartic surfaces $\XXf \subset \PP^3(\CC)$ (resp. affine quartics  $\XXf \subset \CC^3$)
that are not ruled by lines. 
We study configurations of lines on certain non-$K3$ surfaces of degree four and give various examples of singular quartics with many lines.\\ 
{\sl 2010 Mathematics Subject Classification.} Primary: 14J25, 14N25; Secondary: 14N20, 14J70
\end{abstract}

\section{Introduction}

The main aim of this note is to study the configurations of lines 
on projective and affine complex quartic 
surfaces. 
More precisely, we prove the following theorem.

\vspace*{1ex}
\noindent
{\bf Theorem~\ref{thm-affine}} {\sl 
(a) Let $\XXf \subset \PP^3(\CC)$ be a quartic surface with at most isolated singularities, none of which is a fourfold point. Then $\XXf$ contains at most $64$ lines. \\
(b) Let $\XXf \subset \CC^3$ be an affine quartic surface that is not ruled by lines.  Then $\XXf$ contains at most $64$ lines. }

\vspace*{1ex}
\noindent
It should be pointed out that the classification of projective complex quartic surfaces  that are ruled by lines (i.e. contain infinitely many lines) 
has been already  known to K.~Rohn (see \cite{top-11} for a correct classification in modern language), whereas a smooth projective complex quartic surface 
with exactly $64$ lines has been found by F.~Schur in \cite{schur}.

Configurations of lines on surfaces in $\PP^3(\CC)$ have been already studied in the middle of the nineteenth century.
After Cayley, Salmon,  Clebsch and others were able to give a complete picture of configurations of lines on cubic surfaces,
various classes of degree-$4$ surfaces in  $\PP^3(\CC)$ were studied (see e.g. \cite{Clebsch-1868}, \cite{rohn-dreifache-punkt}, \cite{dolgachev}, \cite{top-11} and the bibliography in the latter).
An attempt to prove a sharp bound on the number of lines on smooth quartic surfaces in  $\PP^3(\CC)$ was made first in 
\cite{segre-43}. Unfortunately, Segre's proof in \cite{segre-43}, although full of ingenious ideas, contained serious gaps that were filled in  
\cite{rs-2012} (see also \cite{dis-2015} for a lattice-theoretic argument). The proof 
of \cite{rs-2012} heavily depends on the geometry of $K3$ surfaces and can be generalized to the case of all  complex quartic $K3$ surfaces 
(\cite{veniani-2013}) thus yielding the general bound of at most $64$ lines in the case of complex quartic surfaces with at most ADE singularities. 

In view of the results of \cite{rs-2012}, \cite{veniani-2013} there are two natural strategies to prove Thm~\ref{thm-affine}: either the study of the codimension of the various components of the
appropriate incidence variety to prove that the degree of the covering map is attained along  a component of the Noether-Lefschetz locus that contains smooth quartics (resp. quartics with at most ADE points),
or an analysis of configurations of lines on non-$K3$ quartic surfaces. We follow the latter, because it sheds some light on the geometry of 
line configurations on non-$K3$ quartics. In particular, we show the following proposition.

\vspace*{1ex}
\noindent
{\bf Proposition~\ref{prop-non-K3--non-isolated-singularities}.} {\sl 
Let $\XXf \subset \PP^3(\CC)$ be a non-$K3$ quartic surface. If $\XXf$ is not ruled by lines, then $\XXf$ contains at most $48$ lines. }

\vspace*{1ex}
\noindent
The classification of degree-four surfaces having at least one nonsimple point has been achieved in \cite{degtyarev-90}, 
 where the existence of 2523 various constellations of singularities on such surfaces is shown.  Therefore, an attempt to investigate the configurations
of lines via lattice-theory techniques and study of divisors on their smooth models seems pretty futile. Instead, we apply the fact that after a coordinate change every quartic 
with a nonsimple double point $P$ that contains a line belongs to one of the three families (Q4), (Q5), (Q6)  (see \cite[Thm~8.1]{ctc-wall-99}), with the line 
lying on a fixed coordinate  plane  (Lemma~\ref{lem-coordinates-isolated}).
The latter condition turns out to be very useful for finite-field tests and search for examples.

The organization of the paper is as follows.
After recalling some well-known facts on non-$K3$ quartic surfaces with isolated singularities in Sect.~\ref{sect-preliminaries}, 
we study the configurations of lines on some of them in  Sect.~\ref{sect-isol-sing} and give certain examples of non-$K3$ quartics with many lines. 
Finally in Sect.~\ref{sect-counting-lines}  we collect classical results  on quartics with non-isolated singularities, present the proofs of Prop.~\ref{prop-non-K3--non-isolated-singularities},
Thm~\ref{thm-affine}.  
and give an explicit example of a quartic surface with isolated singularities that contains $39$ lines (Example~\ref{example-thirtynine}). This is the best explicit example of a singular quartic with many lines known so far.
The families (Q4), (Q5), (Q6)  were defined  in the unpublished note  \cite{ctc-wall-99}. 
Since the author of  \cite{ctc-wall-99} does not consider the lines away from the non-simple 
singularity, we sketch a proof of the version of \cite[Thm~8.1]{ctc-wall-99} we need. 

Finally, let us mention that,  apart from being an interesting subject on their own (\cite{BS}, \cite{Miyaoka},  \cite{rs-2012}, \cite{veniani-2013}), configurations of rational curves on surfaces play an important role in current research 
(see e.g. \cite{dumnicki}, \cite{HT}, \cite{Kollar-2014}).

\noindent
{\em Conventions:} In this note we work over the field $\CC$ of complex numbers. All quartic surfaces we consider are assumed not to be cones.
By abuse of notation, whenever it leads to no ambiguity,  we use the same symbol to denote a homogeneous polynomial and the set of its zeroes.


\section{Non-$K3$ quartics with isolated singularities} \label{sect-preliminaries}

In this section we recall some  properties of quartic surfaces in $\PP^3(\CC)$ with isolated singularities,  that we will use in the sequel.

The quartic surfaces with an isolated triple point were studied intensively in \cite{rohn-dreifache-punkt}. 
Let $\XXf \subset \PP^3(\CC)$ be  an irreducible quartic  with  the isolated triple point $O:=\left(0:0:0:1\right)$. Obviously, such a surface is given 
by  an equation of the form
\begin{equation} \label{eq-triple-point}
w Q_3\left(x,y,z\right) + Q_4\left(x,y,z\right)=0,
\end{equation}
where $Q_i$ are homogeneous forms  of degree $i$ without common factors. Thus, the quartic $\XXf$  contains (at most) $12$ lines that pass through the triple point $O$, given by $Q_3 = Q_4 = 0$.

Moreover, given a line  $\ell \subset \XXf$, such that $O \notin \ell$, the   plane spanned by $\ell$ and the triple point $O$ meets $\XXf$ along four lines, three of which are concurrent in $O$.
Therefore, the  projection from $O$ onto the plane $\left\{w=0\right\}$ reduces enumeration of the maximal number of lines on the quartics in question to  an instance of the so-called {\em orchard-planting problem}: 
given $n$ points on a plane, find the maximal number of triplets that are aligned. For twelve points Rohn claims that the maximal number of such triplets is 19 (it can be checked directly with computer), which implies the following lemma.

\begin{lem} {\bf \rm \cite[p.~59]{rohn-dreifache-punkt}} \label{lem-triple-31}
If $\XXf$ is an irreducible quartic with an isolated triple point, then $\XXf$ contains at most 31 lines.
\end{lem}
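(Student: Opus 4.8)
The plan is to exploit the projection from the triple point $O$, exactly as described in the paragraph preceding the statement. Let $\XXf$ be an irreducible quartic with the isolated triple point $O = (0:0:0:1)$, so that $\XXf$ is given by an equation $w Q_3(x,y,z) + Q_4(x,y,z) = 0$ with $Q_3, Q_4$ homogeneous of degrees $3$ and $4$ and without common factor. Lines on $\XXf$ split into two classes: those passing through $O$, and those not through $O$. For the first class, a line through $O$ lies on $\XXf$ if and only if it maps to a common zero of $Q_3$ and $Q_4$ on $\{w = 0\} \cong \PP^2$; since $Q_3, Q_4$ have no common factor, Bézout gives at most $3 \cdot 4 = 12$ such points, hence at most $12$ lines through $O$.

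For the second class, I would fix a line $\LLL \subset \XXf$ with $O \notin \LLL$ and consider the plane $\Pi$ spanned by $\LLL$ and $O$. The intersection $\Pi \cap \XXf$ is a plane quartic curve containing $\LLL$, hence it is a union of four lines (counted with multiplicity). Of these, $\LLL$ is one; I claim the other three all pass through $O$. Indeed, any line in $\Pi \cap \XXf$ other than $\LLL$, together with $\LLL$, would have to account for the residual cubic, and one checks using the shape of the equation that the residual cubic $\Pi \cap \XXf - \LLL$ is exactly the cone-part cut out near $O$; more cleanly, the projection $\pi \colon \XXf \dashrightarrow \PP^2 = \{w = 0\}$ from $O$ contracts precisely the lines through $O$ and is generically $3$-to-$1$ (the fiber over a general point $p$ consists of the three points of $\XXf$ on the line $\overline{Op}$ other than $O$, cf. \eqref{eq-triple-point} which is linear in $w$ once $Q_3(p) \neq 0$). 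A line $\LLL$ not through $O$ is mapped isomorphically by $\pi$ onto a line $\pi(\LLL) \subset \PP^2$, and the three preimage-sheets over $\pi(\LLL)$ recombine so that the plane $\Pi = \overline{O \cdot \LLL}$ meets $\XXf$ in $\LLL$ plus three lines through $O$. Consequently $\pi(\LLL)$ passes through three of the (at most $12$) image points of the lines through $O$: these are exactly the $12$ points $Q_3 = Q_4 = 0$. Thus every line of the second class corresponds to a triple of collinear points among these $\le 12$ points, and distinct lines give distinct triples (the plane $\Pi$, hence the line $\pi(\LLL)$, is recovered from $\LLL$).

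It remains to bound the number of collinear triples among $12$ points in $\PP^2$: this is the orchard-planting problem for $n = 12$, whose answer is $19$ (verifiable by a finite computer search, and attributed to Rohn). Combining the two classes yields at most $12 + 19 = 31$ lines on $\XXf$, which is the assertion. The main obstacle in making this rigorous is the middle step — showing that the residual three lines in $\Pi \cap \XXf$ genuinely pass through $O$ and that the correspondence $\LLL \mapsto \pi(\LLL) \mapsto$ (collinear triple) is injective with image landing in the $\le 12$ special points; all of this follows from a careful reading of \eqref{eq-triple-point}, using that $Q_3, Q_4$ share no common factor so that the base locus of the projection is precisely $\{Q_3 = Q_4 = 0\}$, but the bookkeeping of multiplicities (e.g.\ when $\pi(\LLL)$ is tangent to the discriminant, or passes through a point where $Q_3 = Q_4 = 0$ with higher multiplicity) needs a little care. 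Since we only need an \emph{upper} bound, such degenerate incidences can only decrease the count, so they cause no trouble.
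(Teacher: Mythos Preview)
Your approach is exactly the paper's: split into lines through $O$ (at most $12$ by B\'ezout on $Q_3=Q_4=0$) and lines not through $O$, show that each of the latter determines a plane through $O$ whose residual cubic is three concurrent lines through $O$, and then invoke the orchard bound of $19$ collinear triples among $12$ points. This is precisely the argument sketched in the paragraph preceding the lemma.

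One slip to fix: the projection $\pi\colon\XXf\dashrightarrow\PP^2$ from the triple point $O$ is \emph{birational}, not generically $3$-to-$1$. A general line through $O$ meets $\XXf$ at $O$ with multiplicity $3$ and at exactly one further point (as you yourself note, \eqref{eq-triple-point} is linear in $w$ once $Q_3\neq 0$, giving a unique solution for $w$). So your parenthetical ``three points of $\XXf$ on the line $\overline{Op}$ other than $O$'' is wrong. This does not damage the argument, because the key fact---that the residual to $\ell$ in $\Pi\cap\XXf$ consists of three lines through $O$---follows cleanly from the observation that $\Pi\cap\XXf$ is a plane quartic with a triple point at $O$, so after removing $\ell$ one is left with a plane cubic having a triple point, which is necessarily a product of three linear forms through $O$. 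Replace the ``$3$-to-$1$'' sentence with this and the write-up is fine.
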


The example (c.f.  \cite{rohn-dreifache-punkt}) below shows that Rohn's bound is sharp. 
\begin{ex} \label{example-triple-point}
The surface given by
\begin{equation} \label{eq-example-triple-point}
w Q_3 + Q_4 = w\left(\left(x+y+z\right)^3+xyz\right)+\left(x+y+z\right)\left(x-y\right)\left(y-z\right)\left(z-x\right)=0
\end{equation}
is singular only at the triple point $O=\left(0:0:0:1\right)$ and contains 31 lines. The twelve lines through $O$ intersect the plane $\left\{w=0\right\}$ at the points
\begin{equation} \label{eq-points-rohn}
\begin{array}{llll}
p_1=\left(0:1:-1\right) & p_4=\left(1:1:-1\right) & p_7=\left(1:-1:1\right) & p_{10}=\left(-1:1:1\right) \\
p_2=\left(1:0:-1\right) & p_5=\left(2:2:-5+\sqrt{7}\right) & p_8=\left(2:-5+\sqrt{7}:2\right) & p_{11}=\left(-5+\sqrt{7}:2:2\right) \\
p_3=\left(1:-1:0\right) & p_6=\left(2:2:-5-\sqrt{7}\right) & p_9=\left(2:-5-\sqrt{7}:2\right) & p_{12}=\left(-5-\sqrt{7}:2:2\right) \\
\end{array}
\end{equation}
An elementary computation shows that there are exactly $19$ triplets of collinear points among \eqref{eq-points-rohn}, so there  are 
precisely 19 lines in $\XXf \setminus \{ O \}$. 
\end{ex}

According to \cite{degtyarev-90} there are 2523 various possible constellations of singularities on quartic surfaces with higher singularities.
In the sequel we will apply the following  version of \cite[Thm~8.1]{ctc-wall-99}: 

\begin{lem} \label{lem-coordinates-isolated}
{\bf \rm \cite[Thm~8.1]{ctc-wall-99}} Let $O \in \XXf$ be an isolated non-rational double point of an irreducible  quartic surface $\XXf \subset \PP^3(\CC)$.  
Then, there exist  polynomials 
$$
Q_2=\sum_{i+j=2} q_{ij}x^iy^j \quad and \quad H_4=\sum_{i+j+k=4} h_{ijk}x^iy^jz^k
$$
such that (up to projective equivalence) the quartic  $\XXf$  is given
by one of the following equations:
\begin{eqnarray}
& & w^2z^2+wz Q_2 + H_4=0 \label{eq-q4} \\
& & w^2z^2+w (y^3+zQ_2)+H_4 = 0 \, , \label{eq-q56}
\end{eqnarray}
 $O=\left(x:y:z:w\right)=\left(0:0:0:1\right)$ and, 
if $\XXf$ is given by  \eqref{eq-q56}, then one of the following holds:
\begin{eqnarray}
& & h_{400} = h_{310} = q_{20} =0, \quad \text{and} \quad h_{301} \neq 0, \label{eq-q5} \\
& & h_{400}=\frac{1}{4}q_{20}^2, \quad h_{310}=\frac{1}{2}q_{20}q_{11},  \quad q_{20} \neq 0,  \quad \text{and} \quad h_{301} = 0.  \label{eq-q6}
\end{eqnarray}  
Moreover, if $\LLL \subset \XXf$ is a line that is not contained in the tangent cone $C_{O}\XXf$, then the coordinate change that leads to \eqref{eq-q56} can be chosen  in such a way that $\LLL$ is contained in the plane $\left\{x=0\right\}$.
\end{lem}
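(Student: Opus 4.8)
The plan is to follow Wall's strategy: normalize the equation step by step, at each stage invoking the hypotheses (double, non‑rational, isolated) to constrain the form, and match the outcome against the analytic types, carrying one line along. After a projective change taking $O$ to $(0:0:0:1)$, write $\XXf=\{w^2F_2+wF_3+F_4=0\}$ with $F_i$ homogeneous of degree $i$ in $x,y,z$; since $O$ is a double point, $F_2\neq0$. If $\mathrm{rk}\,F_2\in\{2,3\}$ then, by the Morse/splitting lemma, $O$ is an $A_n$‑singularity, hence rational — contrary to hypothesis; so $\mathrm{rk}\,F_2=1$, and a linear change in $x,y,z$ together with a rescaling gives $F_2=z^2$. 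A substitution $w\mapsto w-\tfrac12\ell$ with $\ell$ linear in $x,y,z$ then removes the monomials $z^3,z^2x,z^2y$ from $F_3$ (the excess going into $F_4$), so that $\XXf=\{w^2z^2+w(zQ_2(x,y)+C_3(x,y))+H_4=0\}$ with $Q_2$ a binary quadric, $C_3$ a binary cubic and $H_4$ a ternary quartic. (Since $w\mapsto w+(\text{quadratic})$ is inadmissible on a quartic, from now on the coordinate freedom is $\mathrm{GL}_3$ on $(x,y,z)$ together with $w\mapsto(\text{linear})+\lambda w$.) By the splitting lemma, near $O$ the surface is analytically $\{\zeta^2+g(x,y)=0\}$ with $g=C_3+(\text{order}\ge4)$, the quartic part of $g$ being $H_4(x,y,0)-\tfrac14Q_2^2$. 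If $C_3=0$ we get \eqref{eq-q4}. If $C_3\neq0$, a binary cubic that is not a perfect cube would make $O$ a $D_n$‑singularity (rational), so $C_3$ is a perfect cube, and a linear change in $(x,y)$ plus a rescaling of $y$ normalizes $C_3=y^3$, i.e.\ \eqref{eq-q56}.

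For the conditions \eqref{eq-q5}/\eqref{eq-q6} one now has $g=y^3+\cdots$ and reads off its Newton boundary. If $h_{400}\neq\tfrac14q_{20}^2$ the boundary is the segment $[(4,0),(0,3)]$ with nondegenerate principal part $(h_{400}-\tfrac14q_{20}^2)x^4+y^3$, so $\mu(O)=6$ and $O\simeq E_6$; if $h_{400}=\tfrac14q_{20}^2$ but $h_{310}\neq\tfrac12q_{20}q_{11}$ the boundary runs $(0,3)\to(3,1)\to(k,0)$, again with nondegenerate faces (the edge $[(0,3),(3,1)]$ carries $y(y^2+cx^3)$, which has no critical point on the torus), so $\mu(O)=7$ and $O\simeq E_7$; if moreover $q_{20}\neq0$ and $h_{301}\neq0$ the boundary is $[(5,0),(0,3)]$, nondegenerate, so $\mu(O)=8$ and $O\simeq E_8$. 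Each possibility contradicts non‑rationality, hence always $h_{400}=\tfrac14q_{20}^2$ and $h_{310}=\tfrac12q_{20}q_{11}$. If $q_{20}\neq0$ this is \eqref{eq-q6} (with $h_{301}=0$ forced as above). If $q_{20}=0$ then also $h_{400}=h_{310}=0$; and if in addition $h_{301}=0$ one checks that $g$ becomes divisible by $y^2$, so $\{\zeta^2+g=0\}$ is singular along a curve — impossible; hence $h_{301}\neq0$, which is \eqref{eq-q5}.

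For the last assertion one builds the normalization around $\LLL$. If $O\in\LLL$: after $F_2=z^2$ a linear change not touching $z$ sends $\LLL$ to $\{x=y=0\}$; the $w$‑shift leaves $x,y,z$ fixed, and the $(x,y)$‑change used to make $C_3=y^3$ preserves $\{x=y=0\}$, so $\LLL\subset\{x=0\}$ throughout. If $O\notin\LLL$: since $O\in\{x=0\}$, the plane $\{x=0\}$ must be $\overline{O\LLL}$; take $z$ from the tangent cone and $x$ a form cutting out $\overline{O\LLL}$; the $w$‑shift again fixes $\LLL$, and the only freedom left for normalizing $C_3$ is to add multiples of $x$ to $y$ (and rescale), which works \emph{unless} the triple line of $C_3$ is $\overline{Oq_0}$, where $q_0=\LLL\cap C_O\XXf$ — a coincidence excluded by the same Newton‑polygon analysis (it would force $O$ rational or non‑isolated), while in case \eqref{eq-q6} it is excluded outright because $h_{400}=\tfrac14q_{20}^2\neq0$ makes the triple line meet $\XXf$ only at $O$. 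The main obstacle is exactly this last step: one must keep precise track of which part of the coordinate group survives each normalization, and rule out the degenerate position of $\LLL$ relative to the distinguished tangent direction of the singularity — the point that goes beyond \cite{ctc-wall-99}, which treats only lines through $O$.
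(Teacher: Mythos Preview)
Your main argument (reducing the tangent cone to a double plane, killing the $z^2$--part of $G_3$ by a shift of $w$, recognising $P_3$ as a perfect cube to avoid $D_n$, and then reading off \eqref{eq-q5}/\eqref{eq-q6} from the degree--$3$ and degree--$4$ jets of the reduced germ) is the same strategy as the paper's, only phrased via the splitting lemma and Newton polygons instead of Weierstra{\ss} preparation and the discriminant curve $\Delta$; the two are equivalent. Two minor points: the case $O\in\ell$ is vacuous since any line through $O$ lies in $\{z=0\}=C_O\XXf$; and ``$g$ becomes divisible by $y^2$'' is asserted without tracking terms beyond degree~$4$ --- the paper avoids this by checking directly on the quartic equation that $\{y=z=0\}\subset\sing(\XXf)$ when $q_{20}=h_{301}=0$.

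The genuine gap is in your treatment of the final assertion in case \eqref{eq-q5}. Your argument for \eqref{eq-q6} is fine: if the plane $\overline{O\ell}$ contained the ``triple line'' $\{y=z=0\}$, then $\ell$ would meet that line, which intersects $\XXf$ only at $O$ (since $h_{400}\neq 0$), forcing $O\in\ell$. But in case \eqref{eq-q5} one has $h_{400}=0$ and the line $\{y=z=0\}$ \emph{does} lie on $\XXf$, so this argument breaks down, and your appeal to ``the same Newton--polygon analysis'' does not exclude the degenerate position. The paper handles this differently and more simply: it first normalises fully to \eqref{eq-q56}, and \emph{then} replaces $x$ by $\widetilde{x}=x+by+cz$ so that $\{\widetilde{x}=0\}=\overline{O\ell}$, re-absorbing the induced $z^2\cdot(\text{linear})$ terms by another $w$--shift. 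For this to work one only needs $\overline{O\ell}\neq\{y=\lambda z\}$, i.e.\ that no plane $\{y=\lambda z\}$ cuts out a line $\ell\not\subset\{z=0\}$ on $\XXf$. That is checked directly: restricting \eqref{eq-q56} to $\{y=\lambda z\}$ and writing a putative line as $w=\alpha x+\beta z$, the coefficient of $x^4$ is $h_{400}$ (nonzero in \eqref{eq-q6}) and the coefficient of $x^3z$ is $h_{310}\lambda+h_{301}+\alpha q_{20}=h_{301}$ (nonzero in \eqref{eq-q5}). This single computation replaces your case analysis and closes the gap.
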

\begin{proof}
Since $O$ is not of $A_n$ type, its tangent cone $C_OS$ must be a double plane. We can hence suppose $O=\left(0:0:0:1\right)$ and $C_OS=\left\{z^2=0\right\}$. In these coordinates, the equation of $S$ must be of the form
$$w^2z^2+wG_3\left(x,y,z\right)+G_4\left(x,y,z\right).$$
Write now $G_3\left(x,y,z\right)=P_3\left(x,y\right)+zQ_2\left(x,y\right)+2z^2L_1\left(x,y,z\right)$, where the subindices indicate the degree of the homogeneous polynomials. Replacing the coordinate $w$ by $\widetilde{w}=w+L_1$, the equation becomes
$$\widetilde{w}^2z^2+\widetilde{w}\left(P_3+zQ_2\right)+\left(G_4-L_1P_3-zL_1Q_2-z^2L_1^2\right).$$
If $P_3 \equiv 0$ (i.e. $z$ divides $G_3$), then we already have an equation of type (\ref{eq-q4}). If otherwise $P_3$ does not vanish identically, we claim that it must be the cube of a linear factor because $O$ is not of D type. Hence after a linear change of coordinates involving only $\left(x,y\right)$ we may assume $P_3=y^3$, obtaining an equation of type (\ref{eq-q56}). Indeed, in local analytic coordinates around $O$, $S$ has equation
\begin{equation} \label{eq-Weier}
z^2+\left(P_3\left(x,y\right)+zQ_2\left(x,y\right)\right)+H_4\left(x,y,z\right)=u\left(x,y,z\right)\left(z^2+zA\left(x,y\right)+B\left(x,y\right)\right)
\end{equation}
for some power series $A,B,u$ with $u\left(0,0,0\right)=1$, showing that $S$ is locally around $O$ the double cover of (an open set of) the plane, branched along the curve $\Delta: A^2-4B=0$. Therefore $O$ will be of ADE type if and only if $\Delta$ has a simple singularity. More precisely, $O$ will be of type D if $\Delta$ has a triple point with two or three different tangents.

Analyzing the jets of $A$ and $B$ we obtain
$$A=Q_2+\eta_3\left(x,y\right) + h.o.t., \quad \text{and} \quad B=P_3+\eta_4\left(x,y\right)-P_3\eta_2\left(x,y\right) + h.o.t.,$$
where $\eta_i\left(x,y\right)$ is the coefficient of $z^{4-i}$ in $H_4$. Hence $A^2-4B = -4P_3\left(x,y\right) + h.o.t.$, which shows that $O$ is of type D if $P_3$ is not the cube of a linear form. Assuming from now on that $P_3=y^3$ and expanding $A^2-4B$ a bit further, it can be shown that $\Delta$ has a non-simple singularity if and only if
\begin{equation} \label{cond-not-E}
q_{20}^2-4h_{400}=2q_{20}q_{11}-4h_{310}=q_{20}h_{301}=0.
\end{equation}
If both $q_{20}=h_{301}=0$ then $O$ is not an isolated singularity, hence only one of the two coefficients vanishes. Clearly, if $q_{20}=0$ (resp. $h_{301}=0$) these conditions reduce to (\ref{eq-q5}) (resp. (\ref{eq-q6})).

As for the last claim, suppose that $S$ is given by either (\ref{eq-q4}) or (\ref{eq-q56}), and let $\pi=\left\{ax+by+cz=0\right\}$ be the plane spanned by $\LLL$ and $O$. It can be checked that no plane of the form $\left\{y=\lambda z\right\}$ cuts out a line on $S$, hence we can assume $a=1$, i.e. $\pi$ admits an equation of the form $\widetilde{x}:=x+by+cz=0$. Replacing $x$ by $\widetilde{x}$ introduces higher powers of $z$ in the term linear in $w$, which can be removed with a linear change of $w$ as at the beginning of the proof. This will change $Q_2$ and $H_4$, but the new equation will remain of type (\ref{eq-q4}) (resp. (\ref{eq-q56}) satisfying the conditions (\ref{cond-not-E})).
\end{proof}

We introduce the following notation.
\begin{df} 
We say that the quartic $\XXf$ belongs to the family (Q4) (resp.  (Q5), resp.  (Q6)) iff after a coordinate change it is given by \eqref{eq-q4}
(resp. \eqref{eq-q56}, \eqref{eq-q5}, resp. \eqref{eq-q56}, \eqref{eq-q6}) and the point $O = (0:0:0:1)$ is an isolated singularity of $\XXf$.
\end{df}

If the quartic $\XXf$ belongs to one of the above families, then  
the (reduced) tangent cone $C_{O}\XXf$ is the plane  $\left\{z=0\right\}$.
Obviously, a line $\sLLL \subset \XXf$ that runs through the double point  $O$ is a component of the intersection 
$C_{O}\XXf \cap \XXf$. This results in the following simple observation.

\begin{lem} \label{obs-lines-through-double-point}
a) Every  quartic $\XXf$ in the family (Q4) contains at most four lines through the singularity $O$, which form a hyperplane section of $\XXf$. \\
b) If a surface $\XXf$ belongs to (Q5), then exactly one line $\sLLL_0$ on $\XXf$ contains the singularity $O$. Moreover the hyperplane section $C_{O}\XXf \cap \XXf$
consists of either two lines or the line $\sLLL_0$ and an irreducible conic. If the former holds the line $\sLLL_0$ is met by exactly one other line on $\XXf$. In the latter case, the line  $\sLLL_0$ intersects no other lines on 
the quartic $\XXf$. \\
c) Each quartic $\XXf$ in the family (Q6) contains no lines through the singularity $O$.
\end{lem}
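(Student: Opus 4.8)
The strategy is to exploit that in all three families the reduced tangent cone at $O$ is the plane $\{z=0\}$, so any line $\sLLL\subset\XXf$ passing through $O$ must lie in $\{z=0\}$ and hence is a component of the hyperplane section $\XXf\cap\{z=0\}$. I would therefore compute this plane section explicitly by setting $z=0$ in each of the defining equations \eqref{eq-q4}, \eqref{eq-q56} and read off how it factors.

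For part (a): setting $z=0$ in \eqref{eq-q4} kills the terms $w^2z^2$ and $wzQ_2$, leaving $H_4(x,y,0)=0$, a quartic form in $x,y$ only, i.e. a degenerate plane curve that is automatically a union of (at most four, with multiplicity) lines through $O=(0:0:0:1)$. Each such line is a component of $\XXf$ iff the corresponding linear factor divides $H_4(x,y,0)$; in any case they all lie in $\{z=0\}$ and pass through $O$, and four is the obvious upper bound. For part (c): setting $z=0$ in \eqref{eq-q56} leaves $wy^3+H_4(x,y,0)=0$. Because $w$ appears, this is not a cone over $O$ unless $y^3\equiv 0$, which it is not; so the section $\{z=0\}\cap\XXf$ is an irreducible (indeed rational, of degree $3$ in the pencil through $O$) curve, and in particular contains no line through $O$ — equivalently, no linear form in $x,y$ divides both $y^3$ and $H_4(x,y,0)$ in a way compatible with a common component, because a line through $O$ in $\{z=0\}$ is $\{z=0,\ ax+by=0\}$ and substituting $x=-by/a$ (say) into $wy^3+H_4$ gives $wy^3 + (\text{form in }y)$, which vanishes identically only if $y^3\equiv 0$.

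Part (b) is where the case analysis is genuine and will be the main obstacle. Under the conditions \eqref{eq-q5} we have $h_{400}=h_{310}=q_{20}=0$ and $h_{301}\neq 0$, so $H_4(x,y,0) = h_{220}x^2y^2+h_{130}xy^3+h_{040}y^4 = y^2(h_{220}x^2+h_{130}xy+h_{040}y^2)$ is divisible by $y^2$. Hence $z=0$ in \eqref{eq-q56} gives $wy^3 + y^2(\cdots) = y^2\bigl(wy + h_{220}x^2+h_{130}xy+h_{040}y^2\bigr)=0$. So the plane section is the double line $\{z=0,\ y=0\}$ — which is the unique line $\sLLL_0$ through $O$ — together with the residual conic $\{z=0,\ wy+h_{220}x^2+h_{130}xy+h_{040}y^2=0\}$. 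Since this conic passes through $O$ (all monomials lie in the ideal $(x,y)$? — check: at $(0:0:0:1)$ the equation reads $h_{220}\cdot 0=0$, yes it passes through $O$), one has to determine when it is irreducible versus a union of two lines: it degenerates precisely when the conic is reducible as a plane conic, i.e. when its discriminant vanishes, and in the reducible case one of the two lines is $\{z=0,y=0\}=\sLLL_0$ again (substitute $y=0$: the equation becomes $h_{220}x^2=0$, so $y=0$ is a component exactly when $h_{220}=0$) while the other is a genuinely new line meeting $\sLLL_0$. I would need to argue $h_{220}=0$ is exactly the degeneracy locus here — more precisely, that the conic $wy+h_{220}x^2+h_{130}xy+h_{040}y^2$ factors iff $h_{220}=0$, in which case it becomes $y(w+h_{130}x+h_{040}y)$, giving $\sLLL_0$ plus the new line $\{z=0,\ w+h_{130}x+h_{040}y=0\}$, which meets $\sLLL_0$ in the point $(1:0:0:-h_{130})$.

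Finally I must upgrade "the only line through $O$ in the plane $\{z=0\}$" to "the only line through $O$ on all of $\XXf$", and count the lines of $\XXf$ that meet $\sLLL_0$: any line meeting $\sLLL_0$ at a point $P\neq O$ spans with $\sLLL_0$ a plane through $O$, hence a plane of the form $\{ax+by=0\}$ (since it contains $\sLLL_0=\{y=z=0\}$... wait, $\sLLL_0=\{z=0,y=0\}$ has $z$ and $y$ vanishing, so planes containing it are $\{by+cz=0\}$); substituting $y=-\tfrac{c}{b}z$ into \eqref{eq-q56},\eqref{eq-q5} and factoring out the component $\sLLL_0$ leaves a cubic residual curve, and one checks it contains a line through $P$ meeting $\sLLL_0$ in exactly the claimed case — I expect exactly one extra line in the "two lines" case (matching the residual line found above) and none in the "conic" case, because then any residual line would give a reducible residual cubic forced by the conic already using up the available factorization. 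Assembling these substitutions and discriminant computations carefully, keeping track of which family we are in, is the bulk of the work; the conceptual content is entirely in the observation that lines through $O$ live in $\{z=0\}$.
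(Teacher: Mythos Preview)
Your treatment of (a), (c), and the decomposition of $\XXf \cap \{z=0\}$ in (b) is essentially the paper's argument: set $z=0$, factor, and use that any line through $O$ lies in the tangent cone $\{z=0\}$. (One small omission in (c): your substitution $x=-by/a$ assumes $a\neq 0$, so you still have to rule out the line $\{y=z=0\}$ separately; this is immediate from $h_{400}=\tfrac14 q_{20}^2\neq 0$ in (Q6).)

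Where you diverge from the paper is the last step of (b), counting lines on $\XXf$ that meet $\sLLL_0$ at a point $P\neq O$. Your plan is to run through the pencil of planes $\{by+cz=0\}\supset\sLLL_0$ and analyze the residual cubic in each. This can be made to work, but your stated justification (``forced by the conic already using up the available factorization'') is not an argument: the conic lives in $\{z=0\}$, whereas the residual cubics you need to control live in the \emph{other} members of the pencil, and there is no a priori reason a cubic in $\{y=\lambda z\}$ cannot acquire a line component. If you pursue this route, the computation you actually need is that the residual cubic $G_\lambda$ in $\{y=\lambda z\}$ satisfies $G_\lambda|_{z=0}=h_{301}x^3$ (using $h_{301}\neq 0$ in (Q5)), so $G_\lambda\cap\sLLL_0=3\cdot O$; hence any line component of $G_\lambda$ would pass through $O$, hence lie in $C_O\XXf=\{z=0\}$, hence equal $\sLLL_0$---which is not a component of $G_\lambda$.

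The paper shortcuts all of this with a single observation: for every $P\in\sLLL_0\setminus\{O\}$ one has $T_P\XXf=\{z=0\}$ (a direct gradient check using $h_{400}=h_{310}=q_{20}=0$ and $h_{301}\neq 0$). Hence any line on $\XXf$ through such a $P$ already lies in $\{z=0\}$, and the count of lines meeting $\sLLL_0$ reduces immediately to the decomposition of $\XXf\cap\{z=0\}$ that you have already carried out. This tangent-plane argument is both shorter and more transparent than the pencil computation.
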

\begin{proof}
a) The intersection $C_{O}\XXf \cap \XXf$ is given by 
the vanishing of $H_4(x,y,0)$,  so $\XXf$ contains at most 4 lines that pass through $O$. \\
b) Put $\sLLL_0 :=\left\{y=z=0\right\}$.  An elementary computation (i.e. substitution in \eqref{eq-q5} and its partials) shows that 
 $\sLLL_0 \subset \XXf$,
the double point $O$ is the only singularity of $\XXf$ on the line $\sLLL_0$ and we have $T_P \XXf =  C_O \XXf = \left\{z=0\right\}$ for all points $P \in \sLLL_0$, $P \neq O$. \\
Moreover, if $h_{220}$ vanishes, then  we have $\XXf.(C_O \XXf) =  3  \sLLL_0 + \ell_1$, where $\ell_1 := \{ h_{130} \cdot x + h_{040} \cdot y +w =0 \}$. Since every line on $\XXf$ that runs through 
a point  $P \in \sLLL_0$, $P \neq O$ lies on the tangent plane $T_P \XXf = \{  z = 0 \}$, the proof (for this case) is complete. \\
Suppose that $h_{220}$ does not vanish, so we can assume  $h_{220}=1$. Then, one can easily see that the hyperplane section $\XXf.(C_O \XXf)$
consists of  the double line $2 \sLLL_0$ and an irreducible conic $C$. Indeed, the latter can be parametrized  (away from the double point $O$) by the map
\begin{equation} \label{eq-parametrization-conic-to-walk-Q5}
\CC \ni t \mapsto P\left(t\right):=\left(t:1:0:-t^2-h_{130}t-h_{040}\right) \in \XXf
\end{equation}
Again, any line on $\XXf$ through a point  $P \in \sLLL_0$, $P \neq O$ must lie on  $C_O \XXf$, so there are no such lines in this case. \\
c) One can easily  see that the hyperplane section  $\XXf.(C_O \XXf)$ contains no lines. 
\end{proof}

Recall that, according to \cite{ctc-wall-99} (see also \cite{degtyarev-90}) (minimal smooth models of) the surfaces in the families (Q4), (Q5) are either rational or elliptic ruled.

Suppose, that a quartic $\XXf$ belongs to the family (Q6). 
Obviously, such a surface meets the hyperplane $\left\{y=tz\right\}$ along a quartic curve with the singular point $O = (0:0:0:1)$. Moreover, 
an elementary computation (see \cite[p.~30]{ctc-wall-99}) shows that  the conditions \eqref{eq-q6}
imply that for generic parameter $t$  the projection from the point $O$ endows the resulting quartic curve with the structure of a double cover of  $\PP^1(\CC)$ branched at exactly  two points.
By Riemann-Hurwitz such a curve is  rational, which yields the following lemma that we will need in the sequel.

\begin{lem} \label{lem-Q6-rational}
{\bf \rm \cite[p.~30]{ctc-wall-99}} If the quartic $\XXf$ belongs to the family (Q6), then $\XXf$ is a rational surface.
\end{lem}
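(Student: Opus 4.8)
The plan is to exhibit $\XXf$ (more precisely, its minimal smooth model) as a rational surface by producing a rational dominant map from $\PP^1(\CC)\times\PP^1(\CC)$ (equivalently, by sweeping out $\XXf$ with a one-parameter family of rational curves each of which meets some fixed rational multisection). Concretely, I would slice $\XXf$ by the pencil of hyperplanes $\{y=tz\}$ through the line $\{y=z=0\}$: for each $t\in\PP^1(\CC)$ this cuts out a plane quartic curve $C_t\subset\XXf$ passing through the double point $O=(0:0:0:1)$. The union $\bigcup_t C_t$ is all of $\XXf$ (the pencil base locus lies in $\XXf$), so it suffices to check that the generic member $C_t$ is a rational curve, and that the family is parametrized rationally by $t$ — the latter being clear since the pencil is a line in the dual $\PP^3$.

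The key step is the rationality of the generic slice $C_t$, which is exactly the computation attributed to \cite[p.~30]{ctc-wall-99}: projection away from $O$ (a double point of the plane quartic $C_t$) exhibits $C_t$ as a double cover of $\PP^1(\CC)$, and one must read off from the normalizations \eqref{eq-q6} that this cover is branched at precisely two points for generic $t$. I would substitute $y=tz$ into \eqref{eq-q56} and use $h_{400}=\tfrac14 q_{20}^2$, $h_{310}=\tfrac12 q_{20}q_{11}$, $q_{20}\neq0$, $h_{301}=0$ to see that the discriminant of the resulting quadratic in $w$ (after clearing the $z^2$ coming from the double point) is, for generic $t$, a polynomial in the affine coordinate of $\PP^1$ whose degree drops to $2$ because the expected leading terms cancel by virtue of these relations — precisely the $q_{20}^2-4h_{400}$ and $2q_{20}q_{11}-4h_{310}$ cancellations already used in the proof of Lemma~\ref{lem-coordinates-isolated} (this is why the singularity is non-simple rather than of type $E$). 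By Riemann--Hurwitz, a double cover of $\PP^1(\CC)$ branched at two points has genus $0$, so $C_t$ is rational.

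From there the conclusion is standard: let $\widetilde{\XXf}\to\XXf$ be a resolution of singularities. The strict transforms $\widetilde{C}_t$ of the generic slices form a base-point-free (or at worst base-point-free after finitely many blow-ups) pencil of rational curves dominating $\widetilde{\XXf}$, so $\widetilde{\XXf}$ carries a fibration over $\PP^1(\CC)$ whose generic fibre is $\PP^1(\CC)$; hence $\widetilde{\XXf}$ is birationally ruled over $\PP^1(\CC)$, i.e. birational to $\PP^1(\CC)\times\PP^1(\CC)$, and therefore rational. Equivalently, the map $\PP^1(\CC)\times\PP^1(\CC)\dashrightarrow\XXf$ sending $(t,s)$ to the point of $C_t$ with parameter $s$ (an explicit rational parametrization of $C_t$ obtained from the double-cover structure, in the spirit of \eqref{eq-parametrization-conic-to-walk-Q5}) is dominant and generically finite of degree one, so it is birational.

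The main obstacle is the genericity bookkeeping in the slice computation: one must make sure that for all but finitely many $t$ the curve $C_t$ is irreducible, has $O$ as an honest ordinary double point (not worse), and that the branch divisor of the projection genuinely has degree $2$ and not $0$ or $1$ after the cancellations forced by \eqref{eq-q6}. I expect this to reduce to checking that a certain polynomial in $t$ — the ``other'' coefficient of the degree-$2$ discriminant — does not vanish identically, which follows from $q_{20}\neq0$ together with the $H_4$, $Q_2$ not being further degenerate; the remaining finitely many bad $t$ are harmless for rationality.
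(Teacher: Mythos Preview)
Your proposal is correct and follows essentially the same approach as the paper: the paper (in the paragraph immediately preceding the lemma) slices $\XXf$ by the pencil $\{y=tz\}$, notes that each slice is a plane quartic singular at $O$, projects from $O$ to realize the generic slice as a double cover of $\PP^1(\CC)$, invokes the conditions \eqref{eq-q6} to see that this cover is branched at exactly two points, and concludes by Riemann--Hurwitz. Your write-up supplies more of the discriminant computation than the paper does (the paper simply cites \cite[p.~30]{ctc-wall-99} for it), but the strategy is identical.
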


\section{Counting lines on $\XXf$ that meet a given curve} \label{sect-isol-sing}

In this section {\em we assume that the quartic $\XXf \subset  \PP^3(\CC)$
we consider contains finitely many lines.} \\
By \cite[Prop.~1.1]{rs-2012},  (resp. \cite[Thm~1.1]{rs-2014}) a line (resp. a conic) on a smooth quartic
$\XXf \subset  \PP^3(\CC)$ is met by at most $20$ other lines (resp. $48$ lines) on the surface in question. Moreover, both bounds are sharp (see \cite{rs-2012}, \cite{rs-2014}) 
In this section we 
examine the analogous problem  for certain lines and conics on some non-$K3$ quartics.

\begin{lem} \label{lem-Q4}
Let $\XXf$ belong to the family (Q4) and let $\ell_0$ be a line in $\XXf \cap (C_O \XXf)$. Then $\ell_0$ is met by at most seven other lines on  $\XXf$.
\end{lem}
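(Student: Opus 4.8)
The plan is to work in the explicit normal form \eqref{eq-q4} and to reduce the count to a pencil of plane sections through $\ell_0$, treating separately the lines through the singular point $O$. By Lemma~\ref{lem-coordinates-isolated} I may assume $\XXf=\{\,w^2z^2+wz\,Q_2(x,y)+H_4(x,y,z)=0\,\}$ with $O=(0:0:0:1)$ and $C_O\XXf=\{z=0\}$. By Lemma~\ref{obs-lines-through-double-point}(a) the hyperplane section $\XXf\cap\{z=0\}$ is a union of at most four lines through $O$, cut out by $H_4(x,y,0)=0$ in $\{z=0\}$; since $\ell_0$ is one of them, a linear change in the coordinates $x,y$ (which does not affect the shape of \eqref{eq-q4}) normalises $\ell_0=\{y=z=0\}$, and then $h_{400}=0$. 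Again by Lemma~\ref{obs-lines-through-double-point}(a), at most three lines of $\XXf$ other than $\ell_0$ pass through $O$; and every line of $\XXf$ through $O$ lies in $C_O\XXf=\{z=0\}$, so any line of $\XXf$ meeting $\ell_0$ at a point different from $O$ avoids $O$.

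Next, the pencil. Let $\pi_t=\{z=ty\}$, $t\in\PP^1$, run through the planes containing $\ell_0$. In coordinates $(x:y:w)$ on $\pi_t$ the residual of $\ell_0$ in $\XXf\cap\pi_t$ is the cubic
$$C_t \;=\; t^2 y\,w^2 \;+\; t\,Q_2(x,y)\,w \;+\; \widetilde C_t(x,y),\qquad \widetilde C_t := H_4(x,y,ty)/y .$$
Every line $\ell\subset\XXf$ with $\ell\ne\ell_0$ meeting $\ell_0$ spans a unique plane $\pi_t$ with $\ell_0$ and is a linear component of $C_t$. Inspecting the $w^2$-coefficient $t^2y$ of $C_t$ shows that for $t\ne0$ the only line through $O=[0:0:1]$ that can divide $C_t$ is $\ell_0$; hence for $t\ne0$ every line of $C_t$ other than $\ell_0$ is a ``graph'' $\{w=\alpha(x,y)\}$ with $\alpha$ linear, meeting $\ell_0$ at $(1:0:0:\alpha(1,0))$. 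Thus the lines meeting $\ell_0$ split into the (at most three) lines through $O$, i.e.\ the components of $C_0$, and the graph lines occurring in the $C_t$, $t\ne0$.

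It remains to bound the number of graph lines by $4$. Since $\XXf$ has isolated singularities, $\ell_0\not\subset\sing\XXf$; checking the partials of the defining polynomial along $\ell_0$, this means $(q_{20},h_{310},h_{301})\ne(0,0,0)$, whence $\ell_0$ is not a component of $C_t$ for generic $t$, and since $\XXf$ carries finitely many lines it is not ruled, so $C_t$ is irreducible for all but finitely many $t$. By the factor theorem applied to $C_t$ as a quadratic in $w$, a graph line $\{w=\alpha\}$ divides $C_t$ exactly when $t^2 y\,\alpha^2+t\,Q_2\,\alpha+\widetilde C_t=0$; writing $\alpha=\alpha_1x+\alpha_2y$ this is a system of four polynomial equations in $(t,\alpha_1,\alpha_2)$, and the $x^3$-coefficient already determines $\alpha_1$ as a rational function of $t$ (when $q_{20}\ne0$). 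Eliminating $\alpha_1,\alpha_2$ and counting the admissible $t$ with multiplicities --- equivalently, analysing the tangent-plane sections $\XXf\cap T_P\XXf$ for $P\in\ell_0\setminus\{O\}$, each of which acquires a double point at $P$ and a component $\ell_0$, and bounding how many of them can carry a further line --- yields at most four graph lines. Together with the previous paragraph this gives at most $3+4=7$ lines of $\XXf$ other than $\ell_0$ meeting $\ell_0$.

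\textbf{Main obstacle.} Everything up to and including the reduction to graph lines is formal. The difficulty is pinning the constant in the last step at $4$ rather than a weaker value: the shape of $C_t$, with its $w^2$-coefficient vanishing on $\ell_0$, strongly restricts a linear component, but converting this into the sharp bound requires either a careful resultant/degree computation for the system above, or a precise accounting of how the finitely many reducible residual cubics are distributed in the pencil and how many lines each can contribute. One must also dispose separately of the degenerate sub-cases, chiefly $q_{20}=0$ and the cases where a $C_t$ acquires a non-reduced component, where the naive elimination breaks down. (Alternatively one can try to exploit that, after projection from $O$, $\XXf$ is birational to a double plane branched along the quartic $Q_2^2=4H_4$ and translate graph lines into bitangents of that quartic through the image of $\ell_0$, but the non-rational nature of the singularity $O$ makes this correspondence delicate.)
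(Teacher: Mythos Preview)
Your reduction is correct and is exactly the one the paper uses (with the roles of $x,y$ swapped): normalise $\ell_0=\{y=z=0\}$, split off the at most three lines through $O$ lying in $C_O\XXf$, and study the residual cubics $C_t$ in the pencil of planes through $\ell_0$. Your observation that for $t\neq 0$ every linear component of $C_t$ other than $\ell_0$ is a ``graph'' $w=\alpha(x,y)$ is also right.

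The gap is precisely the step you flag as the Main obstacle, and the paper closes it with an idea you do not use. Instead of eliminating $(\alpha_1,\alpha_2)$ from the system coming from $t^2y\alpha^2+tQ_2\alpha+\widetilde C_t=0$ (which, as you note, is awkward and breaks down when $q_{20}=0$), the paper first computes the intersection cycle $C_t\cdot\ell_0$. In your normalisation this is $x^2\bigl(tq_{20}\,w+(h_{310}+th_{301})\,x\bigr)$, i.e.\ $C_t\cdot\ell_0=2O+P_t$ with $P_t=(1:0:0:-(h_{310}+th_{301})/(tq_{20}))$ when $q_{20}\neq 0$. Two consequences follow: (i) $\ell_0$ is never a component of $C_t$, and since the local intersection number at $P_t$ is $1$, the cubic $C_t$ is \emph{smooth} at $P_t$; (ii) any graph line, being a component of $C_t$ meeting $\ell_0$ away from $O$, must pass through $P_t$ and hence must equal the tangent line $T_{P_t}C_t$. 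So there is \emph{at most one} candidate line per $t$, and it is explicitly known. Substituting the parametrisation of $T_{P_t}C_t$ into the equation of $C_t$ leaves, after the automatic factor $y^2$ (your $x^2$ after the swap), a linear form $\mathfrak b_3(t)\,x+\mathfrak b_4(t)\,y$ with $\deg\mathfrak b_j=j$; the tangent line lies on $\XXf$ exactly when both $\mathfrak b_3$ and $\mathfrak b_4$ vanish, and since they cannot both vanish identically (finitely many lines), there are at most $\deg\mathfrak b_4=4$ such $t$. The degenerate case $q_{20}=0$ is handled separately and is easier: then $C_t\cdot\ell_0$ is supported at $O$ for all but at most one value of $t$, so at most one plane in the pencil can contribute lines away from $O$.

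In short, the missing ingredient is the ``one point, one tangent'' observation $C_t\cdot\ell_0=2O+P_t$, which replaces your two-parameter elimination by a one-parameter substitution and delivers the sharp constant $4$ directly.
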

\begin{proof} By Lemma~\ref{lem-coordinates-isolated} we can consider the quartic $\XXf$ that is given by \eqref{eq-q4} with the non-rational double point $O = (0:0:0:1)$.
Moreover, after a linear change of coordinates involving only $x$ and $y$ (hence not changing the shape of the equation \eqref{eq-q4}), we can assume that the line $\ell_0=\left\{x=z=0\right\}$ 
lies on $\XXf$ (i.e. we have $h_{040}=0$ in  \eqref{eq-q4}).  We are to show that  
the line  $\ell_0$  meets at most four  lines on the quartic $\XXf$ that do not contain the singularity  $O$.   \\
Consider the pencil  $|{\mathcal O}_{\XXf}(1) - \ell_0|$, i.e. the  cubics $C_{\lambda}$ residual to $\ell_0$ in the intersection of 
$\XXf$ with  the plane $\Pi_{\lambda} := \left\{x=\lambda z\right\}$, where $\lambda \in \CC$. By direct computation, the intersection of $C_{\lambda}$ with  $\ell_0$ is given by 
\begin{equation} \label{Q4-eq-res-cubic-line}
y^2\left(q_{02}w+\left(h_{031}+\lambda h_{130}\right)y\right)
\end{equation}
Suppose $q_{02}=0$. By direct computation, if  $h_{031}$, $h_{130}$ vanish, then $\ell_0 \subset \sing(\XXf)$, which is excluded by our assumptions. 
Moreover, for $h_{130} = 0$, the equation \eqref{Q4-eq-res-cubic-line} yields that all lines on $\XXf$ that meet $\ell_0$ run through the double point $O$.
If otherwise $h_{130} \neq  0$, we infer from  \eqref{Q4-eq-res-cubic-line} that at most one cubic $C_{\lambda}$ 
can contain  lines on $\XXf$ that meet $\ell_0$ away from $O$.  \\
Thus we have shown  Lemma~\ref{lem-Q4} when  $q_{02}$ vanishes and we can now assume $q_{02}=1$. 
By \eqref{Q4-eq-res-cubic-line} we have
$$
C_{\lambda}.\ell_0 = 2 O + P_{\lambda} \, .
$$
Therefore, the cubic $C_{\lambda}$ is smooth at the point $P_{\lambda}=\left(0:1:0:-h_{031}-\lambda h_{130}\right)$ and the tangent line $T_{P_{\lambda}}C_{\lambda}$ can be the only line in  the hyperplane section $\XXf \cap \Pi_{\lambda}$
that does not contain the double point $O$. For the given parameter $\lambda$ the line in question can be parametrized as $(y:z) \mapsto (\lambda z : y : z : L_{\lambda}\left(y,z\right))$. 
Substituting the above parametrization            
into the equation of $C_{\lambda}$ we arrive at 
a polynomial of the form
\begin{equation} \label{eq-linecounter-q4}
z^2 \left(\qqq_3\left(\lambda\right)y+\qqq_4\left(\lambda\right)z\right),
\end{equation}
where $\deg(\qqq_j) = j$. By definition $T_{P_{\lambda_0}}C_{\lambda_0} \subset \XXf$ iff $\lambda_0$ is a common zero of $\qqq_3$, $\qqq_4$. Since $\XXf$ contains finitely many lines, the polynomials $\qqq_3$ and $\qqq_4$ cannot simultaneously
vanish (identically). Therefore the number of lines in question is bounded by $\deg(\qqq_4)=4$ and the proof is complete. 
\end{proof}

Lemma~\ref{lem-Q4} yields a bound on the number of lines on quartics in the family (Q4).
\begin{pro} \label{pro-Q4}
If $\XXf$ belongs to the family (Q4), then it contains at most 20 lines.
\end{pro}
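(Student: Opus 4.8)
The plan is to bound the lines on a quartic $\XXf$ in (Q4) by separating them according to their position relative to the tangent cone $C_O\XXf = \{z=0\}$. By Lemma~\ref{obs-lines-through-double-point}(a) the hyperplane section $\XXf \cap \{z=0\}$ consists of at most four lines, all passing through $O$; call this set $\mathcal{L}_0$ (so $|\mathcal{L}_0| \le 4$). Every other line $\LLL \subset \XXf$ is not contained in $\{z=0\}$, hence meets it in a single point, and in particular $\LLL$ cannot pass through $O$ (a line through $O$ lies in $C_O\XXf$). So it suffices to bound the number of lines not contained in $\{z=0\}$.

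The key tool is Lemma~\ref{lem-Q4}: each of the (at most four) lines $\ell_0 \in \mathcal{L}_0$ is met by at most seven other lines on $\XXf$. First I would dispose of the degenerate case $\mathcal{L}_0 = \emptyset$: if there are no lines through $O$ at all, one can still run the residual-pencil argument of Lemma~\ref{lem-Q4} with respect to \emph{any} plane through $O$ to get a bound, or more simply observe this case does not arise for surfaces actually containing many lines; I would handle it by a direct count or fold it into the generic argument below. In the main case, pick $\ell_0 \in \mathcal{L}_0$. Every line $\LLL \not\subset \{z=0\}$ meets the plane $\{z=0\}$ at a point $p$, and the residual cubic to $\ell_0$ in the plane $\Pi = \langle \LLL, O\rangle$ — or rather the argument must show $\LLL$ actually meets one of the four lines of $\mathcal{L}_0$. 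This is the crux: I claim every line $\LLL$ of $\XXf$ not through $O$ must intersect at least one line of $\mathcal{L}_0$. Indeed $\LLL$ meets $\{z=0\}$ at a point $p$, and the plane $\Pi$ through $\LLL$ and $O$ cuts $\XXf$ in $\LLL$ plus a residual cubic; since $\Pi \cap \{z=0\}$ is a line through $O$ lying in $C_O\XXf \cap \XXf$, i.e. it is one of the lines of $\mathcal{L}_0$, and this line passes through $p \in \LLL$. Hence $\LLL$ meets that line of $\mathcal{L}_0$.

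Granting this, every line of $\XXf$ other than the $\le 4$ lines of $\mathcal{L}_0$ is counted among the $\le 7$ lines meeting some fixed $\ell_0 \in \mathcal{L}_0$; summing over the lines of $\mathcal{L}_0$ gives at most $4 \cdot 7 = 28$ such lines, plus the $4$ lines of $\mathcal{L}_0$ themselves, for a crude total of $32$. To sharpen this to $20$ I would argue more carefully: the lines of $\mathcal{L}_0$ all pass through $O$ and pairwise meet there, so when counting "lines meeting $\ell_0$" the other lines of $\mathcal{L}_0$ already use up part of the budget of seven. Concretely, if $|\mathcal{L}_0| = k$ with $2 \le k \le 4$, then each $\ell_0$ is met by the other $k-1$ lines of $\mathcal{L}_0$ (through $O$) and hence by at most $7-(k-1) = 8-k$ lines not in $\mathcal{L}_0$; a line not in $\mathcal{L}_0$ may meet several lines of $\mathcal{L}_0$, so double-counting only helps, and the number of lines outside $\mathcal{L}_0$ is at most $\sum_{\ell_0 \in \mathcal{L}_0}(8-k) = k(8-k)$, maximized at $k=4$ giving $16$. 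Adding $|\mathcal{L}_0| = 4$ yields $20$. The main obstacle is making this last counting step rigorous — in particular verifying that the lines meeting a fixed $\ell_0$ away from $O$ number at most $8-k$ rather than $7$ (one must check the proof of Lemma~\ref{lem-Q4} actually produces its four extra lines \emph{in addition to} those through $O$, or else re-examine which of the four roots of $\qqq_4$ can coincide with the $O$-lines), and treating the low-$k$ cases ($k \le 1$) where the pencil argument degenerates. Once the bookkeeping is pinned down, the bound of $20$ follows.
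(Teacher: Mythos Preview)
Your approach is the paper's: separate the lines through $O$ (at most four, making up the hyperplane section $\XXf\cap\{z=0\}$) from those not through $O$, note that every line of the second kind meets one of the first (because $\XXf\cap\{z=0\}$ is cut out by the binary quartic $H_4(x,y,0)$ and is thus a union of lines through $O$), and apply Lemma~\ref{lem-Q4}. Your hesitation at the end is unnecessary --- the \emph{proof} of Lemma~\ref{lem-Q4} explicitly establishes that each $\ell_0$ is met by at most four lines \emph{not} containing $O$ (the stated bound of seven is just $3+4$), so the paper's one-line argument reads $4+4\cdot4=20$ without your $k(8-k)$ bookkeeping.
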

\begin{proof} By Lemmata~\ref{obs-lines-through-double-point}.a and \ref{lem-Q4} the hyperplane section  $\XXf \cap \left\{z=0\right\}$ consists of at most four lines, each of which is met 
by at most four lines on $\XXf$ that do not contain $O$. 
\end{proof}

The proof of Proposition~\ref{pro-Q4}, gives a direct way of finding all surfaces $\XXf$ in (Q4) that contain exactly $20$ lines. For such a quartic, the hyperplane section $\XXf \cap C_O \XXf$ 
must consist of exactly four lines. For each of the lines the polynomial $\qqq_4$ has four different zeroes and  $\qqq_3 \equiv 0$. 
The latter condition for a given line in the hyperplane section   results in a system of  (huge) equations involving the coefficients of the quartic. 
In this way one obtains the following simple example which shows that 
the bound of Proposition~\ref{pro-Q4} is sharp.
 
\begin{ex} \label{example-Q4-twentylines} Let $\XXf$ be the quartic surface given by 
$$w^2z^2+wz\left(x + y \right)^2+x^3y+xy^3+x^3z-3x^2z^2+4xz^3+3x^2yz-6xyz^2+3xy^2z+4yz^3-3y^2z^2+y^3z.$$
Obviously, $\XXf$ belongs to the family (Q4). Moreover, a standard Gr\"obner basis computation shows that $(0:0:0:1)$ is the unique
singularity of $\XXf$. We claim that 
$\XXf$ contains exactly $20$ lines. 
 
Indeed, the hyperplane section  $\XXf \cap \left\{z=0\right\}$ consists of the four lines 
$$x=0, y=0, x=(\pm i)y.$$
 For the first line (i.e.  $x=0$),
one can easily imitate the proof of Lemma~\ref{lem-Q4} and  check that $\qqq_3 \equiv 0$ whereas $\qqq_4$ has four different roots (c.f. \eqref{eq-linecounter-q4}). This yields that the line in question is met by exactly seven other lines on $\XXf$.
A similar reasoning shows that the line $x=iy$ intersects precisely seven other lines on $\XXf$. Since the quartic $\XXf$ is invariant under the automorphism
$(x:y:z:w) \mapsto (y:x:z:w),$ the claim follows.
\end{ex}

In the following remark we collect certain facts from \cite{rs-2014} that we will use in the sequel. 


\begin{rmk} \label{remark-walk-along-a-conic}
Recall that we assumed the quartic surface  $\XXf$ not to be  ruled by lines.
Given a smooth point $P \in \XXf$ and a line $\ell \subset X$ that runs through the point $P$,  it is clear that $\ell$ is contained both in the tangent space $T_P\XXf$
and in the quadric $V_P$ defined by the Hessian matrix of (the equation) $\XXf$ at $P$. Moreover, 
if  $T_P\XXf$ is no component of  $V_P$ (e.g.  when $\rk(V_P) \geq 3$), then the intersection $T_P \cap V_P$ consists of (at most) two lines (the so-called {\em principal lines}). 
Suppose that the point $P$ varies along a rational curve $C$ that is parametrized by the map $t \mapsto P\left(t\right)$
and put  
\begin{equation} \label{eq-var-of-prip-lines}
Z:= Z_C := \overline{\cup_{t} \left(T_{P(t)} \cap V_{P(t)} \right)}.
\end{equation}
By definition the variety $Z$  contains all the lines on $\XXf$ that meet the rational curve in question. Moreover,   $Z$ is a surface that intersects $\XXf$ properly (indeed, by construction each component of $Z$ is ruled), 
when the condition
\begin{equation} \label{eq-inclusion}
T_P \not\subset V_P \text{ for general } P \in C
\end{equation}
is satisfied. In this case, the number of lines on $\XXf$ that meet the rational curve is bounded by   $4 \deg(Z)$. \\
Obviously, as $t$ varies, $T_{P\left(t\right)}$ and $V_{P\left(t\right)}$ are given by the equations
\begin{equation} \label{eq-trivial-gothic-forms}
{\mathfrak l}_t\left(x_1,\ldots,x_4\right) = \sum_{i=1}^4 \frac{\partial \XXf}{\partial x_i}\left(P\left(t\right)\right) x_i \quad \text{and} \quad {\mathfrak q}_t\left(x_1,\ldots,x_4\right) = \sum_{i=j=1}^4 \frac{\partial^2 \XXf}{\partial x_i \partial x_j}\left(P\left(t\right)\right) x_ix_j.
\end{equation}
and the resultant ${\mathfrak R}_t := Res_t\left({\mathfrak l}_t,{\mathfrak q}_t\right)$ of \eqref{eq-trivial-gothic-forms} with respect to the parameter $t$
belongs to the ideal ${\mathcal I}(Z)$. 
Computing ${\mathfrak R}_t$ by means of the Sylvester matrix leads to
\begin{equation} \label{eq-deg-of-variety-of-principal-lines}
\deg({\mathfrak R}_t) \leq  \deg_t\left({\mathfrak q}_t\right)\deg_x\left({\mathfrak l}_t\right) + \deg_t\left({\mathfrak l}_t\right)\deg_x\left({\mathfrak q}_t\right) = \deg_t\left({\mathfrak q}_t\right)+2\deg_t\left({\mathfrak l}_t\right).
\end{equation}
\end{rmk}

After those preparations we can examine lines on quartics in the family (Q5).

\begin{lem} \label{lem-Q5}
Let $\XXf$ be given by \eqref{eq-q56}, \eqref{eq-q5}, and let $\sLLL_0 \subset \XXf$ be the unique line through $O$. \\
a) If $\XXf.(C_O \XXf) = 2 \sLLL_0 + C$, then  the irreducible conic $C$ is met by at most $27$ lines on $\XXf$. \\
b) Suppose $\XXf.(C_O \XXf) = 3 \sLLL_0 +  \ell_1$. Then the line $ \ell_1$ intersects at most $23$ lines on $\XXf$.
\end{lem}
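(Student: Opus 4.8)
The plan is to apply the ``walk along a curve'' strategy of Remark~\ref{remark-walk-along-a-conic} to the conic $C$ (part (a)) and to the line $\ell_1$ (part (b)), splitting off in each case the lines that pass through the singularity $O$ (which by Lemma~\ref{obs-lines-through-double-point}.b number at most one, namely $\sLLL_0$), and then bounding the remaining lines by $4\deg(\mathfrak{R}_t)$ via the degree estimate \eqref{eq-deg-of-variety-of-principal-lines}. First I would fix the normal form: by Lemma~\ref{lem-coordinates-isolated} the surface $\XXf$ is given by \eqref{eq-q56} together with \eqref{eq-q5}, so $w^2z^2 + w(y^3 + zQ_2) + H_4 = 0$ with $h_{400}=h_{310}=q_{20}=0$ and $h_{301}\neq 0$, and $\sLLL_0 = \{y=z=0\}$.

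\textbf{Part (a).} Here $h_{220}\neq 0$, so we may take $h_{220}=1$ and the conic $C$ is rationally parametrized by \eqref{eq-parametrization-conic-to-walk-Q5}, i.e. $t\mapsto P(t) = (t:1:0:-t^2-h_{130}t-h_{040})$. The key computation is to substitute this parametrization into the two gradient-type forms \eqref{eq-trivial-gothic-forms}: the linear form $\mathfrak{l}_t(x_1,\dots,x_4)=\sum \partial_{x_i}\XXf(P(t))\,x_i$ has coefficients that are the partial derivatives of $\XXf$ evaluated at $P(t)$, hence polynomials in $t$, and since $\XXf$ has degree $4$ each partial has degree $3$ in the variables, so $\deg_t(\mathfrak{l}_t)\le 3\cdot 2 = 6$ where the $2$ comes from the quadratic appearance of $w$ in $P(t)$; similarly the Hessian form $\mathfrak{q}_t$ has coefficients of degree $2$ in the variables, giving $\deg_t(\mathfrak{q}_t)\le 2\cdot 2 = 4$. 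However — and this is the step requiring care — one must look more closely at the actual shape of $P(t)$ and at which monomials of $\XXf$ survive under \eqref{eq-q5}: the vanishing $h_{400}=h_{310}=q_{20}=0$ kills several leading-order terms, so the true degrees in $t$ are smaller than the crude bounds. The arithmetic should yield $\deg_t(\mathfrak{q}_t) + 2\deg_t(\mathfrak{l}_t)$ small enough that $4\deg(\mathfrak{R}_t)\le 108$, but combined with the fact that the conic $C$ contains no point of $\XXf$ through which $\sLLL_0$-type behaviour forces extra lines, and that $O\notin C$ so none of the counted lines is $\sLLL_0$, the count must be refined down to $27$. I expect that to reach exactly $27$ one separates the principal lines through the two ``points at infinity'' of the parametrization (where $P(t)$ degenerates), or observes that $\mathfrak{R}_t$ picks up spurious factors from the base locus of the pencil $|\mathcal{O}_\XXf(1)-\sLLL_0|$ restricted to $C$, which must be divided out before taking degrees; alternatively one checks directly that the resultant has degree $\le 27/4$ rounded appropriately after removing the contribution of $\sLLL_0$ and of the tangent cone. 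One must also verify the properness condition \eqref{eq-inclusion}, i.e. that $T_{P(t)}\XXf\not\subset V_{P(t)}$ for general $t$; this uses that $\XXf$ is not ruled by lines, so the locus where $T_P\subset V_P$ cannot contain the whole conic.

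\textbf{Part (b).} Now $h_{220}=0$ and $\XXf.(C_O\XXf) = 3\sLLL_0 + \ell_1$ with $\ell_1 = \{h_{130}x + h_{040}y + w = 0,\ z=0\}$ (as in the proof of Lemma~\ref{obs-lines-through-double-point}.b). Since $\ell_1$ is a line, hence also rational, I would parametrize it, say $t\mapsto Q(t)$ linearly, and run the same machinery: substituting a \emph{linear} parametrization into \eqref{eq-trivial-gothic-forms} gives $\deg_t(\mathfrak{l}_t)\le 3$ and $\deg_t(\mathfrak{q}_t)\le 2$, so \eqref{eq-deg-of-variety-of-principal-lines} gives $\deg(\mathfrak{R}_t)\le 2 + 2\cdot 3 = 8$ and hence at most $4\cdot 8 = 32$ principal lines — too weak. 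The refinement to $23$ must come from: (i) removing the lines through $O$, i.e. $\sLLL_0$ itself, which meets $\ell_1$ (this is $\le 1$ line by Lemma~\ref{obs-lines-through-double-point}.b, but may be counted with multiplicity in $\mathfrak{R}_t$); (ii) noting that along $\ell_1\subset\{z=0\}=C_O\XXf$ the tangent plane $T_{Q(t)}\XXf$ is forced — for points on the tangent cone the Hessian degenerates in a controlled way — so that $\mathfrak{q}_t$ has lower $t$-degree than the generic bound, or that $\mathfrak{l}_t$ and $\mathfrak{q}_t$ share a common factor (the equation of $\{z=0\}$ or a power of it) that must be divided out of the resultant; (iii) possibly also discarding the principal line at one special point of $\ell_1$. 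After these reductions the effective degree of $\mathfrak{R}_t$ drops so that the count is at most $23$.

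\textbf{Main obstacle.} The conceptual framework is entirely supplied by Remark~\ref{remark-walk-along-a-conic}; the real work — and the part I expect to be the main obstacle — is the explicit degree bookkeeping for the resultant $\mathfrak{R}_t$ under the constraints \eqref{eq-q5}. The naive bound $4(\deg_t\mathfrak{q}_t + 2\deg_t\mathfrak{l}_t)$ overshoots both $27$ and $23$, so one is forced to identify and remove the ``parasitic'' factors of $\mathfrak{R}_t$: those coming from the line $\sLLL_0$ through $O$ (where the pencil has a base point), from the double line in the tangent cone section, and from the degeneration points of the rational parametrization. Carrying out this factorization correctly — keeping track of multiplicities and of which monomials of $H_4$ and $Q_2$ survive — together with confirming that condition \eqref{eq-inclusion} holds (so the counting of Remark~\ref{remark-walk-along-a-conic} is legitimate), is where the delicate case-analysis lies. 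I would organize it by first writing out $\mathfrak{l}_t$ and $\mathfrak{q}_t$ symbolically, factoring out the obvious common content, and only then invoking \eqref{eq-deg-of-variety-of-principal-lines} on the reduced forms.
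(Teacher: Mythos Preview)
Your plan is exactly the paper's approach, and your diagnosis of the main obstacle is accurate; what is missing is the concrete execution of the bookkeeping, and in one place your crude estimate points you to the wrong mechanism. In part (a) the bound $\deg_t(\mathfrak{l}_t)\le 6$ is far too pessimistic, and the slack does not come primarily from the vanishing $h_{400}=h_{310}=q_{20}=0$: it comes from the fact that $z=0$ along $C$. Since $\XXf$ is quadratic in $w$ and every monomial involving $w$ also carries a factor of $z$ or $y^3$, the partials evaluated at $P(t)=(t:1:0:-t^2-\ldots)$ never see $w^2$; one finds directly $\deg_t(\mathfrak{l}_t)\le 3$ and $\deg_t(\mathfrak{q}_t)=4$, hence $\deg(\mathfrak{R}_t)\le 10$. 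The ``parasitic factor'' you are looking for is then precisely $z^2$: one checks $z^2\mid\mathfrak{R}_t$, and that all partials of $\mathfrak{R}_t/z^2$ vanish along both $\sLLL_0$ and $C$. Thus in the degree-$32$ cycle $\XXf\cdot\mbox{V}(\mathfrak{R}_t/z^2)$ the conic $C$ (not a line) contributes at least $4$ and the line $\sLLL_0$ contributes at least $2$, giving $32-4-1=27$.

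Part (b) runs identically: your degree estimates $\deg_t(\mathfrak{l}_t)\le 3$, $\deg_t(\mathfrak{q}_t)\le 2$ are already the sharp ones the paper uses, so $\deg(\mathfrak{R}_t)\le 8$. Again $z^2\mid\mathfrak{R}_t$, and now $\mathfrak{R}_t/z^2$ (degree $\le 6$) is singular along $\ell_1$ but only simply vanishing along $\sLLL_0$; the cycle $\XXf\cdot\mbox{V}(\mathfrak{R}_t/z^2)$ has degree $\le 24$ with $\ell_1$ contributing at least $2$, yielding $23$. Your speculations (ii) and (iii) about Hessian degeneration or special points on $\ell_1$ are not needed --- the entire refinement is the factor $z^2$ together with the singularity of the reduced resultant along the curves in $C_O\XXf\cap\XXf$.
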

\begin{proof}
a) Keeping the notation of Lemma~\ref{obs-lines-through-double-point}.b and its proof, we may assume
$h_{220} =1$. In order to count the lines that meet the conic $C$ away from the double
point $O$ we consider the variety $Z_C$ (see \eqref{eq-var-of-prip-lines}). \\  
One can easily see that $\XXf$ is smooth along the image of the parametrization \eqref{eq-parametrization-conic-to-walk-Q5}.
We substitute  \eqref{eq-parametrization-conic-to-walk-Q5} into  the Hessian of $\XXf$
and  check that  
the condition \eqref{eq-inclusion} is fulfilled. 
One can also see  that 
$\deg_t\left({\mathfrak l}_t\right) \leq 3$ and $\deg_t\left({\mathfrak q}_t\right) = 4,$ 
so \eqref{eq-deg-of-variety-of-principal-lines} yields the inequality  $\deg({\mathfrak R}_t) \leq  10$. \\ 
By direct computation, the resultant ${\mathfrak R}_t$  is divisible by $z^2$ and all partials derivatives
of $({\mathfrak R}_t/z^2)$ vanish along the line $\sLLL_0$ and the conic $C$. Thus,both rational curves appear in the intersection cycle
\begin{equation} \label{eq-cycle-Q5}
\XXf \cdot \mbox{V}({\mathfrak R}_t/z^2)
\end{equation}
with multiplicity at least two.  Moreover, since  $\sLLL_0$ is the only line on $\XXf \cap \mbox{V}(z)$  (see Lemma~\ref{obs-lines-through-double-point}.b), we have
$Z_C \subset \mbox{V}({\mathfrak R}_t/z^2)$ and  the number of lines on $\XXf$ that meet $C$ is bounded by $27=32-4-1$. 

\noindent b) Recall that $h_{220} =0$ in this case (see the proof of Lemma~\ref{obs-lines-through-double-point}.b).
 We parametrize the line $\ell_1$ away from the point $(1:0:0:-h_{130}) \in \sLLL_0$ 
and put $ P\left(t\right) := \left(t:1:0:-h_{130}t-h_{040}\right)$ for $t \in \CC$
to study the variety $Z_{\ell_1}$ (see \eqref{eq-var-of-prip-lines}). \\ A direct computation shows that  the quartic $\XXf$ is smooth along the image of 
the parametrization given above
and there are at most two points $P \in \ell_1$ such that  $\rk(V_{P}) < 3$. One can easily check that
$\deg_t\left({\mathfrak l}_t\right) = 3$ and $\deg_t\left({\mathfrak q}_t\right)= 2$. Moreover,  one has 
$z^2 | {\mathfrak R}_t$ and  the  degree-$6$ polynomial  $({\mathfrak R}_t/z^2)$ vanishes along the lines $\sLLL_0$, $\ell_1$. 
In this case, however, it is no longer singular along  $\sLLL_0$ (but its partials vanish along $\ell_1$). Finally, we consider the cycle \eqref{eq-cycle-Q5} to obtain the bound of at most $23$ lines on $\XXf$ that meet $\ell_1$. 
\end{proof}

Recall that in this section all quartic surfaces $\XXf$ are assumed to contain finitely many lines.
As an immediate consequence, we obtain the following proposition.
\begin{pro} \label{prop-Q5}
If a quartic $\XXf$ belongs to the family (Q5), then it contains at most $27$ lines. 
\end{pro}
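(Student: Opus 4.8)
The plan is to combine the structural result of Lemma~\ref{obs-lines-through-double-point}.b with the line-counting estimates of Lemma~\ref{lem-Q5}. Fix a quartic $\XXf$ in the family (Q5). By Lemma~\ref{obs-lines-through-double-point}.b there is a unique line $\sLLL_0 \subset \XXf$ through the singularity $O$, and the hyperplane section $\XXf \cdot (C_O\XXf)$ has exactly one of two shapes: either $2\sLLL_0 + C$ with $C$ an irreducible conic, or $3\sLLL_0 + \ell_1$ with $\ell_1$ a line. I would treat these two cases separately, in each case bounding the total number of lines on $\XXf$ by sorting the lines according to whether they meet the "special" curve lying in the tangent cone.

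In the first case ($\XXf \cdot (C_O\XXf) = 2\sLLL_0 + C$), Lemma~\ref{obs-lines-through-double-point}.b tells us that $\sLLL_0$ meets no other line on $\XXf$, so every line on $\XXf$ other than $\sLLL_0$ itself either is disjoint from $\sLLL_0$ or — the key point — must meet the conic $C$. Indeed, I would argue that any line $\ell \neq \sLLL_0$ on $\XXf$ disjoint from $\sLLL_0$ still has to intersect $C_O\XXf = \{z=0\}$ in a point, and that point lies on $\XXf \cap \{z=0\}$, which is $2\sLLL_0 + C$ as a divisor; since $\ell$ is disjoint from $\sLLL_0$, the intersection point lies on $C$. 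Hence every line of $\XXf$ other than $\sLLL_0$ meets $C$, and by Lemma~\ref{lem-Q5}.a there are at most $27$ such lines; adding $\sLLL_0$ would give $28$, so I need a small sharpening — namely that $\sLLL_0$ itself meets $C$ (they both lie in the plane $\{z=0\}$, so they do intersect), and thus $\sLLL_0$ is already among the $\le 27$ lines counted by Lemma~\ref{lem-Q5}.a as meeting $C$. That yields the bound $27$ in this case. In the second case ($\XXf \cdot (C_O\XXf) = 3\sLLL_0 + \ell_1$), the same reasoning shows every line of $\XXf$ distinct from $\sLLL_0$ either meets $\ell_1$ or is $\sLLL_0$; by Lemma~\ref{obs-lines-through-double-point}.b, $\sLLL_0$ meets exactly one other line, which must be $\ell_1$, and $\ell_1$ meets at most $23$ lines on $\XXf$ by Lemma~\ref{lem-Q5}.b, so in total $\XXf$ has at most $23 + 1 = 24$ lines here. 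Taking the worst case over the two possibilities gives the claimed bound of $27$.

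The main obstacle I anticipate is making precise the "every line meets the special curve" step: one must be careful that a line $\ell \subset \XXf$ through a point $P$ of $\{z=0\}$ with $\ell \not\subset \{z=0\}$ meets that plane in exactly one point, and that this point, viewed inside the plane curve $\XXf \cap \{z=0\}$, lies on the reduced conic $C$ (resp. line $\ell_1$) rather than only on $\sLLL_0$ — this uses that $\ell \cap \sLLL_0 = \emptyset$. A second subtlety is that lines lying inside the plane $\{z=0\}$ are exactly the components of $\XXf \cap \{z=0\}$, so in the first case there are no such lines besides $\sLLL_0$ and in the second only $\sLLL_0$ and $\ell_1$; Lemma~\ref{obs-lines-through-double-point}.b already records this. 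Once these incidence bookkeeping points are pinned down, the proposition follows immediately by quoting Lemma~\ref{lem-Q5}, and the argument is short:

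\begin{proof}
By Lemma~\ref{lem-coordinates-isolated} we may assume $\XXf$ is given by \eqref{eq-q56}, \eqref{eq-q5}, with $O = (0:0:0:1)$ and $C_O\XXf = \{z=0\}$. Let $\sLLL_0$ be the unique line on $\XXf$ through $O$ (Lemma~\ref{obs-lines-through-double-point}.b). Any line $\ell \subset \XXf$ with $\ell \not\subset \{z=0\}$ meets the plane $\{z=0\}$ in a single point, which lies on the curve $\XXf \cap \{z=0\}$. If moreover $\ell$ is disjoint from $\sLLL_0$, that point lies on the residual curve to $\sLLL_0$ in $\XXf \cap \{z=0\}$.

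Suppose first $\XXf \cdot (C_O\XXf) = 2\sLLL_0 + C$ with $C$ an irreducible conic. Then the only line on $\XXf$ contained in $\{z=0\}$ is $\sLLL_0$, and by Lemma~\ref{obs-lines-through-double-point}.b the line $\sLLL_0$ meets no other line on $\XXf$. Hence every line $\ell \ne \sLLL_0$ on $\XXf$ is not contained in $\{z=0\}$ and is disjoint from $\sLLL_0$, so it meets $C$. Since $\sLLL_0$ and $C$ both lie in the plane $\{z=0\}$ they also intersect, so $\sLLL_0$ is itself one of the lines meeting $C$. Thus all lines on $\XXf$ meet $C$, and by Lemma~\ref{lem-Q5}.a there are at most $27$ of them.

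Suppose now $\XXf \cdot (C_O\XXf) = 3\sLLL_0 + \ell_1$ with $\ell_1$ a line. The only lines on $\XXf$ contained in $\{z=0\}$ are $\sLLL_0$ and $\ell_1$, and by Lemma~\ref{obs-lines-through-double-point}.b the line $\sLLL_0$ is met by exactly one other line on $\XXf$, which (being coplanar with $\sLLL_0$ in $\{z=0\}$ or, if not in $\{z=0\}$, disjoint from $\sLLL_0$ forces a contradiction) must be $\ell_1$. Every line $\ell$ on $\XXf$ with $\ell \notin \{\sLLL_0,\ell_1\}$ is not contained in $\{z=0\}$ and is disjoint from $\sLLL_0$, hence meets $\ell_1$. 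By Lemma~\ref{lem-Q5}.b the line $\ell_1$ is met by at most $23$ lines on $\XXf$, so $\XXf$ contains at most $23 + 1 = 24$ lines.

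In either case $\XXf$ contains at most $27$ lines.
\end{proof}
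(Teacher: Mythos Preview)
Your proof is correct and follows exactly the same approach as the paper's: one observes via Lemma~\ref{obs-lines-through-double-point}.b that every line on $\XXf$ meets the curve residual to $\sLLL_0$ in the tangent-cone section, and then invokes Lemma~\ref{lem-Q5}. The paper compresses this into two sentences, while you have spelled out the incidence bookkeeping (in particular that $\sLLL_0$ itself meets the residual curve, and that in the second case one must add $1$ for $\ell_1$ to get $24$); these details are sound and match what the paper's terse argument implicitly uses.
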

\begin{proof}
By Lemma~\ref{obs-lines-through-double-point}.b,
every line on $\XXf$ meets the curve residual to the line $\sLLL_0$ in the hyperplane section $\XXf \cap C_O \XXf$.
Therefore, the claim follows directly from Lemma~\ref{lem-Q5}.
\end{proof}

This bound could probably be improved by studying the infinitesimal neighbourhood of $\XXf$ along $C$ (resp. $\ell_1$). However, since it is already better than Rohn's bound for the case of triple points, we will not pursue this improvement.

Finally, we 
 examine lines on quartic surfaces that contain a line of double points.

\begin{lem} \label{lem-count-double-line}
If a quartic $\XXf$ contains a line $\ell_0$ of double points, then $\ell_0$ is  met by at most 16 other  lines on the surface $\XXf$.
\end{lem}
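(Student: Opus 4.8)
If a quartic $\XXf\subset\PP^3(\CC)$ contains a line $\ell_0$ all of whose points are singular, then after a projective change of coordinates I may assume $\ell_0=\{x=z=0\}$. Because every point of $\ell_0$ is a double point, the equation of $\XXf$ lies in the square of the ideal $(x,z)$, so it can be written as
$$
\XXf = x^2 A_2 + x z B_2 + z^2 C_2 + x D_3 + z E_3,
$$
where $A_2,B_2,C_2$ are forms of degree $2$ in $(y,w)$ (with a degree-$\le 1$ correction in $x,z$) and $D_3,E_3$ are cubics; I will extract only the part of the equation that matters for intersection with planes through $\ell_0$. The plan is to run the residual-cubic argument used in Lemma~\ref{lem-Q4} and Lemma~\ref{lem-Q5}: consider the pencil of planes $\Pi_\lambda=\{x=\lambda z\}$ through $\ell_0$, and let $C_\lambda$ be the cubic residual to $\ell_0$ in $\XXf\cap\Pi_\lambda$. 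Since $\ell_0$ is a double line, $\XXf\cap\Pi_\lambda$ contains $\ell_0$ with multiplicity two, so in fact $\XXf\cap\Pi_\lambda = 2\ell_0 + (\text{conic }G_\lambda)$, a pencil of \emph{conics} rather than cubics. Every line $\ell\ne\ell_0$ on $\XXf$ meeting $\ell_0$ lies in some $\Pi_\lambda$ and must be a component of $G_\lambda$; hence such a line is accounted for by the splitting of $G_\lambda$ into two lines.

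**Counting the degenerate conics.** The conic $G_\lambda$ degenerates into a pair of lines exactly when a certain discriminant $\Delta(\lambda)$ — the determinant of the $3\times 3$ symmetric matrix of $G_\lambda$ in the coordinates $(y,z,w)$ on $\Pi_\lambda$ — vanishes. Since $\XXf$ has degree $4$ and $\ell_0$ is removed with multiplicity two, $G_\lambda$ is a conic whose coefficients are polynomials in $\lambda$, and a direct degree count of the Sylvester/determinantal expression bounds $\deg\Delta$. Each value $\lambda$ with $\Delta(\lambda)=0$ contributes at most two lines to $\XXf$ through $\ell_0$ (the two components of the degenerate conic $G_\lambda$, minus $\ell_0$ itself when $\ell_0$ happens to be one of them). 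The crux is therefore: (i) show $\Delta\not\equiv 0$ — this uses the hypothesis that $\XXf$ contains only finitely many lines, since $\Delta\equiv 0$ would force every plane through $\ell_0$ to contain a line of $\XXf$ other than $\ell_0$, giving infinitely many lines; and (ii) pin down $\deg\Delta$ and the contribution of each root precisely enough to reach the bound $16$. I expect $\deg\Delta\le 8$, with two lines per root, yielding $16$; one must be careful that the roots where $\ell_0$ itself is a component of $G_\lambda$ (i.e. where the residual conic meets $\ell_0$ abnormally, or where $\XXf\cap\Pi_\lambda$ contains $\ell_0$ with multiplicity $>2$) still obey the count.

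**The main obstacle.** The delicate point is the bookkeeping of \emph{which} degenerate fibres $G_\lambda$ genuinely produce a \emph{new} line distinct from $\ell_0$, and whether a single plane $\Pi_\lambda$ can contain two new lines (it can — a degenerate conic is two lines) without double-counting. A clean way to organize this: parametrize a smooth point $P(t)=(0:1:0:t)\in\ell_0$ moving along $\ell_0$ is useless since $\ell_0$ is singular, so instead I work directly with the linear system $|\caO_{\XXf}(1)-2\ell_0|$, note it is a pencil of conics on the (pulled-back) surface, and count its reducible members via the associated determinantal curve. Showing $\deg\Delta\le 8$ is a routine but careful Sylvester-matrix estimate as in Remark~\ref{remark-walk-along-a-conk} (applied to the restriction of $\XXf$ to the pencil), analogous to \eqref{eq-deg-of-variety-of-principal-lines}. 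The genuinely substantive step — and the one I would spend the most effort on — is ruling out $\Delta\equiv 0$ and, more subtly, ruling out that $\ell_0$ appears as a component of $G_\lambda$ for more than finitely many $\lambda$ (which would mean $\ell_0$ has multiplicity $\ge 3$ in $\XXf\cap\Pi_\lambda$ for all $\lambda$, forcing an extra vanishing of the equation that one must show contradicts either irreducibility of $\XXf$ or the ``not ruled by lines'' hypothesis); handling this uniformly across the cases where $\XXf$ is reducible or has worse singularities along $\ell_0$ is where the care lies.
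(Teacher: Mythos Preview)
Your approach is essentially the paper's: the pencil $|\mathcal{O}_{\XXf}(1)-2\ell_0|$ of residual conics, with degenerate members counted by the $3\times 3$ determinant of the conic, which has degree $\le 8$ in the pencil parameter, giving at most $2\cdot 8=16$ lines. Two minor points: once the equation lies in $(x,z)^2$ there should be no $xD_3+zE_3$ terms (write it as $x^2A+xzB+z^2C$ with $A,B,C$ quadratic in all four variables, as the paper does), and the paper dispatches your $\Delta\equiv 0$ worry in one stroke by choosing coordinates so that one fibre of the pencil is an irreducible conic.
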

\begin{proof}
We can assume that $\ell_0=\mbox{V}(x,y)$, the quartic  $\XXf$ is given by the  polynomial
\begin{equation} \label{eq-equation-double-line}
Q_{20}\left(x,y,z,w\right) \cdot x^2 + Q_{11}\left(x,y,z,w\right) \cdot xy + Q_{02}\left(x,y,z,w\right) \cdot y^2 = 0
\end{equation}
and the conic $C_0$ residual to the line $\ell_0$ in the hyperplane section $\XXf \cap \mbox{V}(x)$ is irreducible.
Then, all singular conics in $|{\mathcal O}_{\XXf}(1) - 2 \ell_0|$ are given by vanishing of the determinant of the Hessian matrix of 
$(\XXf(x,\lambda x,z,w)/x^2)$. One can easily check that the determinant in question is of degree at most $8$ w.r.t. the parameter $\lambda$, so 
 $\XXf$ contains at most $16$ lines that meet the line $\ell_0$.
\end{proof}

\begin{lem} \label{lem-double-line-residual-conic}
If a quartic $\XXf$ contains a line $\ell_0$ of double points, then a general 
conic $C_0$ in  $|{\mathcal O}_{\XXf}(1) - 2 \ell_0|$  
meets at most $19$ lines on $\XXf$.
\end{lem}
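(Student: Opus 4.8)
## Proof Proposal

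The plan is to imitate the strategy already used for the analogous statements (Lemma~\ref{lem-Q4}, Lemma~\ref{lem-Q5}), namely to realize the lines meeting the residual conic $C_0$ as components of the ruled surface $Z_{C_0}$ traced out by the principal lines along $C_0$, and then bound $\deg(Z_{C_0})$ via the resultant estimate \eqref{eq-deg-of-variety-of-principal-lines} from Remark~\ref{remark-walk-along-a-conic}. Concretely, I would keep the normalization of Lemma~\ref{lem-count-double-line}: write $\ell_0 = \mbox{V}(x,y)$ and put $\XXf$ in the form \eqref{eq-equation-double-line}, so that the hyperplane section $\XXf \cap \mbox{V}(x)$ is $2\ell_0 + C_0$ with $C_0$ irreducible. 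Since $C_0$ is a conic it is rational, so I can fix an explicit parametrization $t \mapsto P(t)$ of $C_0$ (of degree $2$ in the coordinates), with $P(t)$ a smooth point of $\XXf$ for general $t$ — this needs to be checked, but it follows because the generic point of the irreducible conic $C_0$ is not a double point of $\XXf$ (the double locus along $\mbox{V}(x)$ is exactly $\ell_0$ by hypothesis, or at worst finitely many further points).

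Next I would verify the non-degeneracy condition \eqref{eq-inclusion}, i.e. that $T_{P(t)}\XXf \not\subset V_{P(t)}$ for general $t$; this is a direct substitution of the parametrization into the Hessian of \eqref{eq-equation-double-line}, and since $\XXf$ is not ruled by lines (our standing assumption in Remark~\ref{remark-walk-along-a-conic}) the condition holds generically. Then the degree count: $P(t)$ has coordinates of degree $2$ in $t$, so the linear form ${\mathfrak l}_t$ (built from first partials of a quartic, evaluated at $P(t)$) has $\deg_t({\mathfrak l}_t) \le 6$ and the quadratic form ${\mathfrak q}_t$ (second partials, evaluated at $P(t)$) has $\deg_t({\mathfrak q}_t) \le 4$. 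Plugging into \eqref{eq-deg-of-variety-of-principal-lines} gives $\deg({\mathfrak R}_t) \le \deg_t({\mathfrak q}_t) + 2\deg_t({\mathfrak l}_t) \le 4 + 12 = 16$, hence $Z_{C_0} \subset \mbox{V}({\mathfrak R}_t)$ has degree at most $16$, and the number of lines meeting $C_0$ would be at most $4\cdot 16 = 64$ — far too weak. So the real work is the same as in Lemma~\ref{lem-Q5}: one must extract the "spurious" factors of ${\mathfrak R}_t$ that do not contribute lines on $\XXf$. I expect ${\mathfrak R}_t$ to be divisible by a high power of $x$ (the double line $\ell_0$ forces the first and second partials of \eqref{eq-equation-double-line} to vanish to high order along $\ell_0$, so $\ell_0 \subset T_{P(t)}\cap V_{P(t)}$ systematically), and after removing $x^k$ the remaining polynomial should still vanish along $\ell_0$ and along $C_0$ itself with multiplicity, exactly as the resultants $({\mathfrak R}_t/z^2)$ did in the proof of Lemma~\ref{lem-Q5}. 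Carefully accounting for: (i) the power of $x$ dividing ${\mathfrak R}_t$; (ii) the multiplicity with which $\ell_0$ appears in $\XXf \cdot \mbox{V}({\mathfrak R}_t/x^k)$ — noting $\ell_0$ is a double line so it may be counted with extra multiplicity; and (iii) the fact that $C_0$ itself lies in the zero locus — should drop the effective degree from $16$ down to the claimed bound, so that $\#\{\text{lines meeting }C_0\} \le 19$.

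The main obstacle is precisely step (ii)/(iii): pinning down the exact multiplicities of the "automatic" components $\ell_0$ and $C_0$ inside the cycle $\XXf \cdot \mbox{V}({\mathfrak R}_t/x^k)$. As in Lemma~\ref{lem-Q5} this is not purely formal — it requires substituting the explicit parametrization and examining whether the partial derivatives of ${\mathfrak R}_t/x^k$ vanish along $\ell_0$ and $C_0$ (so that these curves appear with multiplicity $\ge 2$) or only the polynomial itself. Since $\ell_0$ is a line of double points, one expects it to absorb substantially more than a conic of simple points would, which is what makes the bound $19$ (rather than something larger) plausible; the bookkeeping is a finite, if tedious, Gröbner/resultant computation, and once the numbers $k$ and the multiplicities are fixed, the bound $4\deg(Z_{C_0}) \le 4\cdot(16 - k - (\text{contribution of }\ell_0) - (\text{contribution of }C_0))$ should evaluate to at most $19$. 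One subtlety to flag: if for special surfaces $C_0$ degenerates (e.g. the generic conic in $|{\mathcal O}_{\XXf}(1) - 2\ell_0|$ is not irreducible), the statement is about a \emph{general} $C_0$, so I would first argue that for $\XXf$ not ruled by lines the general member of this pencil is indeed an irreducible conic, discarding the locus where it splits into lines (those cases being covered by Lemma~\ref{lem-count-double-line} or by the classification of ruled quartics).
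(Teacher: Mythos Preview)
Your overall strategy is exactly the paper's: parametrize $C_0$, form ${\mathfrak l}_t$, ${\mathfrak q}_t$, take the resultant, strip spurious factors, and read off the bound. The difficulty you correctly identify --- controlling the multiplicities so the final number is $\le 19$ --- is precisely where your write-up and the paper diverge, and as written your numerics will not close.

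The paper gains the missing degrees \emph{before} taking the resultant, not after. Two concrete tricks you are missing:
\begin{itemize}
\item A case split according to whether $C_0$ meets $\ell_0$ transversally or tangentially, with a tailored normalization of the parametrization in each case: $P(t)=(0:t:1:t^2)$ resp.\ $P(t)=(0:1:t^2:t)$. Because $\XXf$ has the special shape \eqref{eq-equation-double-line} and the first coordinate of $P(t)$ vanishes, one actually gets $\deg_t({\mathfrak l}_t)\le 5$ (resp.\ $\le 4$) rather than your generic $\le 6$, and $\deg_t({\mathfrak q}_t)\le 4$.
\item In the transversal case, the parametrization is arranged so that $P(0)\in\ell_0\subset\sing(\XXf)$; hence all first partials vanish at $P(0)$ and ${\mathfrak l}_t$ is divisible by $t$. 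Replacing ${\mathfrak l}_t$ by $\tilde{\mathfrak l}_t={\mathfrak l}_t/t$ \emph{before} forming the resultant drops the degree further.
\end{itemize}
With these reductions the paper obtains $\deg(\tilde{\mathfrak R}_t)\le 12$ in both cases, then removes $x^2$ and uses that $\mbox{V}(\tilde{\mathfrak R}_t/x^2)$ is singular along $\ell_0$ to arrive at $17$ (transversal) resp.\ $19$ (tangent) lines. Starting from your $\deg({\mathfrak R}_t)\le 16$, the only spurious factor one can actually verify is $x^2$, and the multiplicities of $\ell_0$, $C_0$ in the residual cycle are not large enough to bring $4\cdot 14$ down to $19$; so the bookkeeping you propose does not terminate at the right number without the pre-resultant simplifications above.
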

\begin{proof}

We keep the notation of the proof of Lemma~\ref{lem-count-double-line} and consider the variety $Z_{C_0}$ (see \eqref{eq-var-of-prip-lines}). 

Suppose that the conic $C_0$ meets the line $\ell_0$ transversally. 
Without loss of generality we can assume that  the conic in question 
can be parametrized
(away from one point on the double line $\ell_0$) by 
$P\left(t\right):=\left(0:t:1:t^2\right)$ for $t \in \CC$. 
Then  (c.f. \eqref{eq-trivial-gothic-forms}) we have $\deg_t\left({\mathfrak l}_t\right) = 5$  and $\deg_t\left({\mathfrak q}_t\right)= 4$. Moreover, the form ${\mathfrak l}_t$ 
is divisible by $t$ (recall that  $P\left(0\right) \in \sing(\XXf))$. Thus $\tilde{\mathfrak R}_t:=Res_t(\tilde{{\mathfrak l}}_t,{\mathfrak q}_t)$, where $\tilde{{\mathfrak l}}_t :=  {\mathfrak l}_t/t$, belongs to the ideal of the variety $Z_{C_0}$ (c.f. Remark~\ref{remark-walk-along-a-conic})
and \eqref{eq-deg-of-variety-of-principal-lines} gives  $\deg(\tilde{\mathfrak R}_t) \leq 12$. Furthermore, a direct computation shows that 
$x^2 | \tilde{\mathfrak R}_t$, all partials of the polynomial $(\tilde{\mathfrak R}_t/x^2)$ vanish along the line of singularities of $\XXf$ and the non-degeneracy condition \eqref{eq-inclusion} is satisfied.
Therefore, a general conic $C_0 \in |{\mathcal O}_{\XXf}(1) - 2 \ell_0|$ meets the surface  $\mbox{V}(\tilde{\mathfrak R}_t/x^2)$ in at most $18$ points, two of which belong to the line $\ell_0$.
This yields the bound of at most $17$ lines on $\XXf$ that meet the curve $C_0$. 

Suppose now that the conic $C_0$ is tangent to the line $\ell_0$. We can assume that $C_0$ is parametrized (away from the point on the line   $\ell_0$) by
$P(t):=\left(0:1:t^2:t\right)$, where $t \in \CC$. In this case we have $\deg_t({\mathfrak l}_t)\leq 4$ and $\deg_t({\mathfrak q}_t)\leq 4$. Moreover, by a 
direct computation, the resultant ${\mathfrak R}_t$ is again 
divisible by $x^2$, the non-degeneracy condition \eqref{eq-inclusion} is fullfilled and the surface   $\mbox{V}({\mathfrak R}_t/x^2)$  is  singular along the line  $\ell_0$. 
Thus a general conic $C_0 \in |{\mathcal O}_{\XXf}(1) - 2 \ell_0|$ meets at most $19$ lines on $\XXf$. 
\end{proof}

\begin{lem} \label{prop-double-line}
If a quartic $\XXf$ is not ruled by lines and contains a line of double points, then there are at most $35$ lines on $\XXf$.
\end{lem}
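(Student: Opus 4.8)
The plan is to combine the local structure near the double line with the counting estimates already established. Let $\ell_0 \subset \XXf$ be the line of double points. First I would set up the pencil $|{\mathcal O}_{\XXf}(1) - 2\ell_0|$ of conics residual to $\ell_0$ in the hyperplane sections through $\ell_0$, as in Lemmata~\ref{lem-count-double-line} and \ref{lem-double-line-residual-conic}; writing $\XXf$ in the form \eqref{eq-equation-double-line}, a general member $C_0$ of this pencil is an irreducible conic meeting $\ell_0$, and every line on $\XXf$ must either coincide with $\ell_0$, meet $\ell_0$, or meet a general $C_0$ (since any line on $\XXf$ not through a point of $\ell_0$ lies in some hyperplane $\{x = \lambda y\}$ and hence meets the residual conic in that plane, which is a generic member of the pencil). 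The worry is the small set of lines meeting $\ell_0$ precisely at the two base points shared by all the conics; but those are exactly the lines counted in Lemma~\ref{lem-count-double-line}.

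The counting then goes as follows. By Lemma~\ref{lem-count-double-line}, $\ell_0$ itself is met by at most $16$ other lines on $\XXf$. By Lemma~\ref{lem-double-line-residual-conic}, a general conic $C_0 \in |{\mathcal O}_{\XXf}(1) - 2\ell_0|$ meets at most $19$ lines on $\XXf$. Any line $\sLLL \subset \XXf$ with $\sLLL \neq \ell_0$ that is \emph{not} among the $\le 16$ lines meeting $\ell_0$ must lie in a hyperplane $\Pi$ containing $\ell_0$ with $\Pi \cap \XXf = 2\ell_0 + C_\Pi$; for all but finitely many choices of $\Pi$ the conic $C_\Pi$ is a general member, so $\sLLL$ meets the general $C_0$. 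Hence the lines other than $\ell_0$ split into those meeting $\ell_0$ (at most $16$) and those meeting a general $C_0$ (at most $19$), giving at most $1 + 16 + 19 = 36$. To reach the sharper bound $35$ I would observe that the two base points of the pencil lie on $\ell_0$, so a line through a base point is simultaneously a line meeting $\ell_0$ and (in the limit) a line "meeting" every conic of the pencil; more precisely, at least one line is double-counted — for instance the residual line/conic behaviour at a base point forces one of the at most $19$ intersection points of $C_0$ to already be accounted for among the $\le 16$ lines through $\ell_0$, or else one of the $16$ is $\ell_0$-tangent data that collapses — so the true total is at most $35$.

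The main obstacle is making this overlap argument airtight, i.e.\ justifying that the bound $16 + 19$ genuinely overcounts by at least one. The cleanest route is probably to analyze the two base points $B_1, B_2 \in \ell_0$ of the pencil $|{\mathcal O}_{\XXf}(1) - 2\ell_0|$ directly: a general conic $C_0$ passes through $B_1$ and $B_2$, and any line on $\XXf$ through $B_1$ or $B_2$ is counted both in Lemma~\ref{lem-count-double-line} (as a line meeting $\ell_0$) and in Lemma~\ref{lem-double-line-residual-conic} (as a line meeting $C_0$ at a base point). If there is at least one such line, we save one; if there is none, then the two base points of $C_0$ contribute no lines, so the effective bound from Lemma~\ref{lem-double-line-residual-conic} drops, and again the total is at most $35$. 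I would also double-check the degenerate sub-cases — the residual conic $C_0$ being reducible or the line $\ell_0$ being a line of worse-than-double points along part of its length — but these are excluded or handled by the hypotheses and the setup in the earlier lemmas, so I expect them to be routine.
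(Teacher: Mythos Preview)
Your overall strategy matches the paper's: the proof there is literally one line, ``follows directly from Lemmata~\ref{lem-count-double-line} and \ref{lem-double-line-residual-conic}'', i.e.\ $16+19=35$. However, two steps in your write-up are garbled and one of them is actually false as stated.

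First, the geometric reason a line $\sLLL\neq\ell_0$ not meeting $\ell_0$ still meets a general $C_0$ is \emph{not} that $\sLLL$ ``lies in some hyperplane $\{x=\lambda y\}$''. A line skew to $\ell_0$ lies in \emph{no} plane through $\ell_0$; if it did, it would meet $\ell_0$. The correct argument is the opposite: a skew line meets every plane $\Pi_\lambda\supset\ell_0$ in exactly one point, that point lies on $\XXf\cap\Pi_\lambda=2\ell_0+C_\lambda$ and is off $\ell_0$, hence on $C_\lambda$. Thus a skew line meets \emph{every} conic in the pencil, in particular a general one. Your sentence in the first paragraph and the repetition in the second paragraph both need this fix.

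Second, the passage from $1+16+19=36$ down to $35$ does not require the delicate base-point overcount you sketch. The clean observation is that $\ell_0$ itself meets $C_0$ (at the one or two points $C_0\cap\ell_0$), so $\ell_0$ is already one of the at most $19$ lines furnished by Lemma~\ref{lem-double-line-residual-conic}. Equivalently: for a general $C_0$ the two points $C_0\cap\ell_0$ avoid the finitely many points of $\ell_0$ through which the $\le 16$ lines of Lemma~\ref{lem-count-double-line} pass, so the lines on $\XXf$ split disjointly into those meeting a general $C_0$ (at most $19$, including $\ell_0$) and those meeting $\ell_0$ but not $C_0$ (at most $16$). That gives $19+16=35$ on the nose, with no case analysis needed.
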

\begin{proof}
The claim follows directly from Lemmata~\ref{lem-count-double-line} and \ref{lem-double-line-residual-conic}.
\end{proof}

\begin{ex} \label{example-doubleline-twentyseven}
Consider the quartic $S$ given by \eqref{eq-equation-double-line}, where we put
\begin{eqnarray*}
Q_{20} &:= &wz+yw+yz-10/3\,xy-4\,{y}^{2}, \\
Q_{11} &:= &z (w+z), \\
Q_{02} &:= &1/2\,{w}^{2}+1/4\,wz+1/2\,{z}^{2}+xw+3\,xz+5/4\,yw+5/4\,yz-1/2\,xy.
\end{eqnarray*}
The hyperplane section $\mbox{V}(y)$ splits into $x^2 \cdot z \cdot w$. One can check that
the line of singularities  $\ell_{0}:= \mbox{V}(y,x)$ is met by $16$ other lines on $\XXf$. Moreover, the line  $\mbox{V}(y,z)$ (resp. $\mbox{V}(y,w)$) intersects eight (resp. six) other lines on $S$.
Altogether, the surface $\XXf$ contains $27$ lines. 

\end{ex}


\section{Counting lines on quartic surfaces} \label{sect-counting-lines}

{\sl Recall that in this note all quartic surfaces are assumed not to be cones.} 

In order to count lines on the singular quartics we will use the technique invented already by 
Salmon (see e.g. \cite{salmon2}, \cite[$\S$~13.2]{eisenbud-harris}, \cite[$\S$~8]{Kollar-2014} and the bibliography in the latter).

\begin{lem} {\bf \rm \cite[p. 277]{salmon2}} 
Let $\XXf \subset \PP^3(\CC)$ be a quartic surface  with isolated singularities, none of which is a fourfold point. 
Then there exists a degree-$20$ hypersurface ${\mathcal F}_{\XXf}$ in  $\PP^3(\CC)$
such that $\XXf \nsubseteq {\mathcal F}_{\XXf}$ and 
$$
{\mathcal F}_{\XXf} \cap \XXf = \{P\in S; \text{ there exists  a line } \ell \subset\PP^3 \mbox{ with }  i_P(\XXf,\ell)\geq 4\}.
$$
\end{lem}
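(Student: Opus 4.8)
The statement is essentially Salmon's classical construction of the "flex locus" adapted to quartics with isolated non-quadruple singularities, so the plan is to produce the hypersurface $\mathcal{F}_{\XXf}$ explicitly as the locus where the tangent line has contact order at least four, and then check the two required properties: that $\XXf \not\subseteq \mathcal{F}_{\XXf}$ and that the intersection with $\XXf$ has the stated description. First I would work on the smooth locus $\XXf^{\rm sm} = \XXf \setminus \sing(\XXf)$. For a smooth point $P$, a line $\ell$ through $P$ has $i_P(\XXf,\ell)\geq 2$ iff $\ell \subset T_P\XXf$, has $i_P(\XXf,\ell)\geq 3$ iff moreover $\ell$ lies in the quadric cone $V_P$ cut out by the Hessian of (the defining quartic $F$ of) $\XXf$ at $P$ (the second fundamental form), and has $i_P(\XXf,\ell)\geq 4$ iff in addition $\ell$ lies in the cubic cone cut out by the third-order term of the Taylor expansion of $F$ at $P$ restricted to $T_P\XXf$. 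So the condition that \emph{there exists} such a line at $P$ is that the three cones $T_P\XXf \cap V_P \cap (\text{cubic cone})$ have a common point, i.e. a certain resultant of the restricted quadratic and cubic forms (after restricting to the plane $T_P\XXf$, a two-variable system) vanishes. I would write this resultant out, observe it is a polynomial in the partial derivatives of $F$ at $P$ up to order $3$, hence a polynomial in the homogeneous coordinates of $P$ of some fixed degree, and check that this degree equals $20$; this is the degree bookkeeping that gives the "$20$" in the statement, and it is the one place I'd be careful to reproduce Salmon's count correctly (each derivative of order $k$ of a quartic drops the degree by $k$, so a $\partial^2$ form has coefficients of degree $2$ and a $\partial^3$ form has coefficients of degree $1$; the Sylvester matrix of the binary quadric and binary cubic, after accounting for the linear change of coordinates diagonalising $T_P\XXf$, yields total degree $20$). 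Call the resulting degree-$20$ form $\mathcal{F}_{\XXf}$ on $\PP^3$.

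Next I would prove $\XXf \not\subseteq \mathcal{F}_{\XXf}$. This is exactly the assertion that a general point of $\XXf$ admits \emph{no} line with contact order $\geq 4$; equivalently, that the "flecnodal curve'' is a proper subvariety. The cleanest argument is by exhibiting a single smooth point $P_0$ of some quartic in each situation where $\mathcal{F}_{\XXf}$ does not vanish — but since $\mathcal{F}_{\XXf}$ depends on $\XXf$, the right statement is: for \emph{every} quartic $\XXf$ with the allowed singularities, $\mathcal{F}_{\XXf}$ does not vanish identically on $\XXf$. I would argue this by a dimension/deformation count, or more simply by noting that if $\mathcal{F}_{\XXf}\equiv 0$ on $\XXf$ then through every smooth point of $\XXf$ there passes a line meeting $\XXf$ to order $\geq 4$; taking the incidence variety of (point, line) with this contact condition and projecting to the Grassmannian, one sees $\XXf$ would be covered by lines each meeting it in $\geq 4$ points — and since a line not contained in $\XXf$ meets the quartic in exactly $4$ points counted with multiplicity, such a line would be either contained in $\XXf$ or tangent-to-order-$4$; a $1$-parameter family of the former means $\XXf$ is ruled, contradicting… — wait, here the hypothesis is only "no fourfold point,'' not "not ruled,'' so I must instead rule this out directly. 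The standard fact (Salmon) is that the flecnodal hypersurface of a smooth surface of degree $d\geq 2$ is a genuine hypersurface, never the whole surface, because the flecnodal curve has degree $d(11d-24)$ and is nonempty and proper; for quartics with isolated singularities the same computation on the smooth locus applies since $\mathcal{F}_{\XXf}$ was defined by a formula valid there. I would therefore cite/reprove that the flecnodal form is not a multiple of $F$ by checking it on one explicit smooth quartic (e.g. the Fermat quartic) and then arguing that non-vanishing is an open condition unaffected by acquiring isolated non-quadruple singularities — this is the step I expect to be the main obstacle, and I'd handle it by the explicit-example-plus-openness route rather than a general position argument.

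Finally I would identify $\mathcal{F}_{\XXf}\cap\XXf$ set-theoretically with $\{P\in \XXf : \exists\,\ell,\ i_P(\XXf,\ell)\geq 4\}$. On the smooth locus this is immediate from the construction, since by design $\mathcal{F}_{\XXf}(P)=0$ $\iff$ the resultant of the two restricted binary forms vanishes $\iff$ the quadric and cubic cones share a line inside $T_P\XXf$ $\iff$ some line through $P$ has contact $\geq 4$. The only thing left is the behaviour at the isolated singular points $P\in\sing(\XXf)$: since $P$ is at worst a triple point (it is not a fourfold point, and it is isolated so the tangent cone is a proper cone of degree $2$ or $3$), \emph{every} line through $P$ lying in the tangent cone automatically has $i_P(\XXf,\ell)\geq 3\geq$ — in fact for a triple point any line in the (cubic) tangent cone has $i_P\geq 3$ and generically exactly $3$, while for a double point a line in the tangent cone has $i_P\geq 2$; to get $i_P\geq 4$ at a singular point one needs a line meeting to even higher order, which exists (e.g. a component line of the tangent cone that also lies in higher-order terms). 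The claim in the lemma is only that $\mathcal{F}_{\XXf}$ passes through exactly those $P$ — so I need $\mathcal{F}_{\XXf}$ to vanish at every isolated singular point (true, because $\mathcal{F}_{\XXf}$ is built from derivatives of $F$ of order $\leq 3$ and at a multiple point of multiplicity $\geq 2$ all first partials of $F$ vanish, forcing the resultant to vanish — at a double point $T_P\XXf$ degenerates and the convention is that the resultant vanishes, at a triple point all first \emph{and} second partials vanish) and conversely every singular point trivially admits a line with $i_P(\XXf,\ell)\geq\mult_P(\XXf)\geq 2$, and in fact one can always find a line in the tangent cone with $i_P\geq 4$ by choosing it inside a suitable component — I would spell this out using that the tangent cone has degree $\leq 3$ so a generic line in it already gives contact $= \mult_P \leq 3$, and then perturbing within the cone to raise the contact, which is possible precisely because $\mult_P\leq 3<4$ forces the line to meet the quartic with total multiplicity $4$ somewhere and one can concentrate it at $P$. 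Assembling these three pieces — the explicit degree-$20$ construction on the smooth locus, non-vanishing on $\XXf$, and the match-up at singular points — completes the proof.
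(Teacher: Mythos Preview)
Your construction of $\mathcal{F}_{\XXf}$ as the vanishing locus of a resultant of the second and third fundamental forms restricted to $T_P\XXf$ is the correct classical Salmon approach, and your treatment of the set-theoretic identification on the smooth locus and at the isolated singular points is essentially fine. The paper, by contrast, gives essentially no argument: it simply cites \cite{top-11} for the fact that such an $\XXf$ contains only finitely many lines, and \cite{eisenbud-harris} for the construction of $\mathcal{F}_{\XXf}$ and its degree.

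There is, however, a genuine gap in your argument for $\XXf \not\subseteq \mathcal{F}_{\XXf}$. You correctly see that if $\XXf \subseteq \mathcal{F}_{\XXf}$ then every smooth point carries a line of four-fold contact, and you correctly flag that the hypothesis is not ``$\XXf$ not ruled''. But your proposed fix---checking on the Fermat quartic and invoking openness---does not work: openness of the condition ``$\mathcal{F}_{\XXf}$ is not a multiple of the defining quartic'' in the parameter space of all quartics only yields the conclusion for a \emph{generic} quartic, not for \emph{every} quartic with isolated non-fourfold singularities. A specific such $\XXf$ could a priori lie in the closed bad locus, and nothing in your argument excludes this.

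The missing input is exactly what the paper imports from \cite{top-11}: a quartic in $\PP^3(\CC)$ with only isolated singularities, none of which is a fourfold point, contains only finitely many lines and is therefore not ruled. Combined with the classical equivalence that $\XXf \subseteq \mathcal{F}_{\XXf}$ if and only if $\XXf$ is ruled (part of the standard Salmon/Eisenbud--Harris package you are already invoking), this gives $\XXf \not\subseteq \mathcal{F}_{\XXf}$ immediately. So your proof is nearly complete; you only need to replace the openness step by this citation to the classification of ruled quartics.
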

\begin{proof} By \cite[$\S$~3]{top-11} no quartic surface $\XXf$  with isolated singularities (none of which is a fourfold point) contains infinitely many lines.
The claim is a standard intersection theory computation (see e.g. \cite[$\S$~13.2]{eisenbud-harris}).            
\end{proof}


\begin{lem} \label{lem-non-K3-isolated-singularities}
Let $\XXf \subset \PP^3(\CC)$ be a non-$K3$ quartic surface with isolated singularities, none of which is a fourfold point. Then $\XXf$ contains at most $48$ lines.
\end{lem}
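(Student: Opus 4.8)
The strategy is to reduce Lemma~\ref{lem-non-K3-isolated-singularities} to the cases already handled in Section~\ref{sect-preliminaries} and Section~\ref{sect-isol-sing}, together with the classical Rohn bound of Lemma~\ref{lem-triple-31}. First I would dispose of the reducible case: if $\XXf$ is not irreducible, then each irreducible component is a plane, a quadric or a cubic, and a short count (a plane contributes infinitely many lines only if the whole surface is ruled, which is excluded since a non-$K3$ quartic with finitely many lines is being counted; a smooth cubic carries $27$, a quadric carries at most $2$ pencils but only finitely many once the surface is not ruled, and lines in the intersections of components are few) keeps the total well below $48$. So I would assume from now on that $\XXf$ is irreducible.

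Next I would invoke the classification of the singularities. Since $\XXf$ is non-$K3$ with only isolated singularities and no fourfold point, its minimal resolution is not a $K3$ surface, so $\XXf$ must possess a singular point that is \emph{not} of ADE type (a quartic with at worst ADE singularities has a $K3$ minimal model). Such a worse-than-ADE isolated point is either a triple point or a non-rational double point. If it is a triple point, Lemma~\ref{lem-triple-31} immediately gives at most $31 \le 48$ lines. If it is a non-rational double point, then by Lemma~\ref{lem-coordinates-isolated} the surface $\XXf$ belongs to one of the families (Q4), (Q5), (Q6), and Propositions~\ref{pro-Q4} and~\ref{prop-Q5} bound the number of lines by $20$ and $27$ respectively in the first two cases.

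The remaining, and principal, case is (Q6). Here Lemma~\ref{obs-lines-through-double-point}.c tells us that \emph{no} line of $\XXf$ passes through the singular point $O$, so every line of $\XXf$ is disjoint from $O$ (as a point of $\PP^3$) — more precisely, avoids the tangent cone $C_O\XXf = \{z=0\}$ at least generically, and we can bound lines contained in that plane section separately (the hyperplane section $\XXf \cap \{z=0\}$ contains no lines by Lemma~\ref{obs-lines-through-double-point}.c). I would then pick any line $\ell \subset \XXf$; by the last sentence of Lemma~\ref{lem-coordinates-isolated} we may put $\ell \subset \{x=0\}$ while keeping the (Q6) normal form. Intersecting $\XXf$ with the pencil of planes $\{y = t z\}$ through the edge of the tangent cone, each such plane meets $\XXf$ in the rational quartic curve described in the text (double cover of $\PP^1$ branched at two points, hence rational by Riemann--Hurwitz); running the principal-lines argument of Remark~\ref{remark-walk-along-a-conic} along a suitable rational curve — or, more simply, along the residual cubic $C_t$ to $\ell$ in the plane spanned by $\ell$ and the line $\{x=z=0\}$, mimicking the proof of Lemma~\ref{lem-Q4} — one bounds the number of lines meeting $\ell$. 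The expected bound is again at most $20$ per line, whence at most $1 + 20 = 21$ lines if they all meet a fixed one; but since $\XXf$ need not be covered by a single line-neighbourhood, the cleanest route is to bound lines meeting a fixed line $\ell$ (say by $N$) and lines disjoint from $\ell$ by a secondary incidence/degree argument using that $\XXf$ is rational (Lemma~\ref{lem-Q6-rational}), so that a generic line lies in a rational family whose total count is controlled. Combining, $48$ comfortably dominates all three subcases, completing the proof.

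**Main obstacle.** The delicate point is the (Q6) analysis: unlike (Q4) and (Q5) there is no distinguished conic or line through $O$ to ``walk along'', so one must exhibit an explicit rational curve on $\XXf$ (the rational hyperplane sections $\{y=tz\}\cap\XXf$, or a line together with its residual cubic) along which the non-degeneracy condition \eqref{eq-inclusion} can be verified and the resultant $\mathfrak{R}_t$ computed. Checking that $T_P \not\subset V_P$ generically, and extracting the correct power of the defining form of the tangent cone dividing $\mathfrak{R}_t$, is where the real work lies; the degree bookkeeping from \eqref{eq-deg-of-variety-of-principal-lines} then has to be sharp enough to land under $48$, which — given the slack between the true count ($\le 27$-ish) and $48$ — I expect to go through without optimisation.
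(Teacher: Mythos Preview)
Your reduction to the (Q6) case is correct and matches the paper exactly: isolated singularities force irreducibility (so your discussion of reducible components is unnecessary), a non-ADE point is either triple (Lemma~\ref{lem-triple-31}) or a non-rational double point, and (Q4)/(Q5) are handled by Propositions~\ref{pro-Q4} and~\ref{prop-Q5}.

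The gap is your treatment of (Q6). Neither of your two suggestions gives a bound:
\begin{itemize}
\item Walking with Remark~\ref{remark-walk-along-a-conic} along a rational plane quartic $\XXf\cap\{y=tz\}$ yields $\deg_t({\mathfrak l}_t)\approx 12$, $\deg_t({\mathfrak q}_t)\approx 8$, so $\deg({\mathfrak R}_t)$ is around $30$ and $4\deg({\mathfrak R}_t)$ is far above $48$; the slack you anticipate is not there.
\item Walking along the residual cubic to $\ell$ in \emph{one} fixed plane bounds only the lines meeting that cubic, not the lines meeting $\ell$ in other planes. The trick in Lemma~\ref{lem-Q4} works because $\ell_0$ passes through $O$ and the residual cubic meets $\ell_0$ in $2O+P_\lambda$, forcing at most one candidate line per plane; in (Q6) no line contains $O$, so this structure is absent.
\end{itemize}
Your fallback (``secondary incidence/degree argument using that $\XXf$ is rational'') is not a method.

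The paper's (Q6) argument is genuinely different. One fixes a line $\ell_0\subset\XXf$, observes (via Lemma~\ref{obs-lines-through-double-point}) that every line meets only ADE points, and builds from the pencil $|\mathcal O_\XXf(1)-\ell_0|$ a relatively minimal genus-one fibration $\phi_{\ell_0}:\XXf_{\ell_0}\to\PP^1$ on a \emph{rational elliptic surface} (using Lemma~\ref{lem-Q6-rational}). Lines meeting $\ell_0$ become fibre components, so $e(\XXf_{\ell_0})=12$ caps their number (at most $12$, and if exactly $12$ one lands on the extremal surface $X_{3,3,3,3}$ with nine sections, giving $\le 22$ lines total). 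Lines disjoint from $\ell_0$ become sections, hence $(-1)$-curves; a rank computation against $\rho(\XXf_{\ell_0})=10$ shows $\XXf$ has at most $11$ pairwise disjoint lines. With four coplanar lines one gets $\le 32$; without, each line meets $\le 6$ others. The final step uses the degree-$20$ flecnodal hypersurface ${\mathcal F}_\XXf$ of Salmon: either some residual conic $C$ to two coplanar lines escapes ${\mathcal F}_\XXf$, giving $\le \deg({\mathcal F}_\XXf|_C)+6+6=48$, or all such conics lie in the degree-$80$ cycle ${\mathcal F}_\XXf\cdot\XXf$, forcing $\ge 19$ pairwise disjoint lines and a contradiction. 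None of these three ingredients---the rational elliptic model, the Picard-rank bound on disjoint lines, and ${\mathcal F}_\XXf$---appears in your plan, and something of comparable strength is needed.
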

\begin{proof}
It results from \cite[$\S$~3]{top-11} that the surface $\XXf$ contains only finitely many lines. 
 If $\XXf$ has a triple point, the claim follows from
Lemma~\ref{lem-triple-31}. By Prop.~\ref{pro-Q4},~\ref{prop-Q5} we can assume that $\XXf$ belongs to the family (Q6) and it belongs to neither (Q4) nor (Q5). In particular, 
Lemma~\ref{obs-lines-through-double-point} implies that 
\begin{equation} \label{eq-at-worst-ADE-along-the-line}
\mbox{any line on $\XXf$ runs at worse through rational double points of $\XXf$.}
\end{equation}

Suppose $\XXf$ contains a line $\ell_0$. At first we construct a smooth model of $\XXf$ that we use in the proof. \\ The pencil  $|{\mathcal O}_{\XXf}(1) - \ell_0|$ induces a rational map $\psi_{\ell_0}: \XXf \dashrightarrow \mP^1$ whose general fiber is a smooth elliptic curve. 
Let $\widetilde{\XXf}$ be the minimal resolution of singularities of $\XXf$. By \eqref{eq-at-worst-ADE-along-the-line}, the rational map $\psi_{\ell_0}$  
lifts to a morphism $\widetilde{\psi_{\ell_0}}: \widetilde{\XXf} \rightarrow \mP^1$. 

By Lemma~\ref{lem-coordinates-isolated} we can assume that  $\XXf$ is given by \eqref{eq-q56}, \eqref{eq-q6} and the line $\ell_0$ is contained in the plane $\mbox{V}(x)$.
Let $\widetilde{C}$ be the strict transform of the cubic $C$ that is residual to the line $\ell_0$ in the hyperplane section $\XXf \cap \mbox{V}(x)$. A local computation 
with help of \eqref{eq-q56} shows that $\widetilde{C}$ is a (-1)-curve. Thus the genus-one fibration  $\widetilde{\psi_{\ell_0}}$ is not minimal, but only its fibers induced by cubics through 
non-rational double points of $\XXf$ contain $(-1)$-curves. We blow-down the strict transforms of these cubics to obtain a minimal genus-one fibration 
$\phi_{\ell_0}:\XXf_{\ell_0} \rightarrow \mP^1$. Observe that the smooth surface $\XXf_{\ell_0}$ is rational (see Lemma~\ref{lem-Q6-rational}). Moreover, 
the lines on $\XXf$ that meet (resp. do not intersect) the line $\ell_0$ become components of singular fibers (resp. sections) of  the genus-one fibration  $\phi_{\ell_0}$.

\noindent
{\sl Claim 1. The quartic $\XXf$ contains at most $11$ pairwise disjoint lines.}

\noindent
Indeed, suppose that $\XXf$ contains twelve pairwise disjoint lines $\ell_0, \ell_1,\ldots, \ell_{11}$ and consider the rational elliptic surface $\XXf_{\ell_0}$.
Let $C \subset \XXf$ be a cubic that is coplanar with $\ell_0$ and runs through a non-rational double point of the quartic surface. Since the lines $\ell_1,\ldots, \ell_{11}$ meet $C$ in different points,
their proper transforms  $\widehat{\ell_i} \subset \XXf_{\ell_0}$ meet transversely in exactly $n$ points, where $n$ is the number of non-rational singularities of $\XXf$.
Moreover, each $\widehat{\ell_i}$ is a section of $\phi_{\ell_0}$, hence it is a $\left(-1\right)$-curve. From the equality $\widehat{\ell_i}^2 = \widetilde{\ell_i}^2-n = -2-n$ (where $\widetilde{\ell_i} \subset \widetilde{S}$ denotes the strict transform on the minimal resolution of singularities), we infer that $\XXf$ has one non-rational singularity and that $\widehat{\ell_i}.\widehat{\ell_j}=1$ whenever $i \neq j$. 
In this way we can check that the intersection matrix $[\widehat{\ell_i}.\widehat{\ell_j}]_{i,j=1,\ldots,11}$ has rank $11$, which is impossible because for the Picard number of the  rational elliptic surface $\XXf_{\ell_0}$ one has $\rho(\XXf_{\ell_0})=10$. Contradiction. 

$\mbox{}$ \hfill $\Box_{\mbox{\tiny Claim~1}}$

\noindent
{\sl Claim 2. If a line $\ell_0 \subset \XXf$  is met by at least $12$ other lines, then  $\XXf$ contains at most $22$ lines.}

\noindent
Each line $\ell \subset \XXf$ meeting $\ell_0$ induces a component of a singular fiber of the genus-one fibration on $\XXf_{\ell_0}$. Since   $e(\XXf_{\ell_0})=12$, there can be at most 12 such lines.
Moreover, these  $12$  lines  on $\XXf$ induce four $I_3$-fibers of the fibration $\phi_{\ell_0}$.  If there are no other lines on $\XXf$, then we are done. Otherwise, $\phi_{\ell_0}$ has a section and $\XXf_{\ell_0}$
is the extremal rational surface $X_{3,3,3,3}$. By  \cite[p.~77]{miranda},  the fibration $\phi_{\ell_0}$ has nine sections and the Claim~2 follows.
$\mbox{}$ \hfill $\Box_{\mbox{\tiny Claim~2}}$

\noindent
{\sl Claim 3. If there are no four coplanar  lines on $\XXf$, then each line on  $\XXf$ is met by at most $6$ other lines on the quartic surface.}

\noindent
Indeed, each line that meets the line $\ell_0 \subset \XXf$ gives a contribution of at least $2$ to the Euler number  $e(\XXf_{\ell_0})=12$, so we have at most $6$
such lines. $\mbox{}$ \hfill $\Box_{\mbox{\tiny Claim~3}}$

\noindent
{\sl Claim 4. If there are four coplanar  lines on $\XXf$, then  $\XXf$ contains at most $32$ lines.}

\noindent
By Claim~2 we can assume that each line on $\XXf$ is met by at most $11$ other lines.
Since the sum of Euler numbers of all singular fibers is $12$, the case of exactly $11$ lines that meet a given line on $\XXf$ is also ruled out. 
Since each line on $\XXf$ intersects one of the four coplanar lines,  Claim~4 follows.
$\mbox{}$ \hfill $\Box_{\mbox{\tiny Claim~4}}$

After those preparations we can complete the proof of Lemma~\ref{lem-non-K3-isolated-singularities}. 
By Claims~3 and 4 we can assume that $\XXf$ contains no four coplanar  lines and each line is met by at most $6$ other lines on $\XXf$. \\
If there exist coplanar lines $\ell_1$, $\ell_2$ on $\XXf$ such that the irreducible conic $C \in |{\mathcal O}_{\XXf}(1) - (\ell_1 + \ell_2)|$  
is not contained in the degree-$20$ hypersurface ${\mathcal F}_{\XXf}$, then 
one can easily check that 
the number of lines on $\XXf$ is bounded by 
$$
\deg({\mathcal F}_{\XXf} | _{C \setminus (\ell_1 \cup \ell_2)})+ 6 + 6  = 48.
$$
\noindent Otherwise, each pair of coplanar lines on $\XXf$ induces a conic in the degree-$80$ cycle $({\mathcal F}_{\XXf} \cdot \XXf)$. 
If the number of the residual conics in ${\mathcal F}_{\XXf}$ does exceed $15$ we obtain  again that $\XXf$ contains at most $48$ lines.
Suppose we have at most $15$ conics in  ${\mathcal F}_{\XXf}$. Then there are at least $19$ pairewise disjoint  lines on $\XXf$, which contradicts Claim~1.
\end{proof}

Configurations of lines on 
 irreducible quartic surfaces with non-isolated singularities
 were a subject of interest of classical algebraic geometry.
In the lemma below we recall some well-known facts (see e.g. \cite{dolgachev}, \cite{top-11}). 


\begin{lem} \label{lem-trivial-cases} Let $\XXf \subset \PP^3(\CC)$ be an irreducible quartic.\\
a) If $\XXf$ contains either a twisted cubic of double points or a line of triple points or two skew lines of singular points, then it is ruled by lines.\\
b) {\bf \rm \cite[p.~143]{Clebsch-1868}} If an irreducible quartic $\XXf$ is singular along a conic, then it contains at most  $16$ lines.
\end{lem}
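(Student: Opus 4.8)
\emph{Overall plan.} The two parts of the lemma call for quite different arguments. For (a) the mechanism is uniform: being singular with the prescribed multiplicity along a \emph{smooth} curve $C$ forces the defining quartic into a power of the ideal $I_C$, and in each of the three cases this rigidifies the equation so much that a $1$-parameter family of lines sweeping out $\XXf$ becomes explicit. For (b) I would first pin every line of $\XXf$ to the double conic, and then reduce the enumeration to a resultant computation along a $1$-parameter family of planes.

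\emph{Part~(a).} Suppose $\XXf$ is singular along a twisted cubic $C$. Being singular along $C$ means that $f$ lies in the second symbolic power $I_C^{(2)}$, which for the smooth curve $C$ equals $I_C^2$; as $I_C$ is generated by a net of quadrics $q_0,q_1,q_2$ (the $2\times2$ minors of a $2\times3$ matrix of linear forms), the degree $4$ part $(I_C^2)_4$ is spanned by the $q_iq_j$, so $f$ is a quadratic form in $q_0,q_1,q_2$. Hence $\XXf$ is the preimage, under $[q_0:q_1:q_2]\colon\PP^3(\CC)\dashrightarrow\PP^2(\CC)$, of a conic; since the fibres of this map are the chords of $C$ (two general quadrics of the net meet $C$ residually in a line), $\XXf$ is swept out by a $1$-parameter family of chords and is ruled by lines. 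If $\XXf$ is singular along a line $\Lambda$ of triple points, then $f\in I_\Lambda^{(3)}=I_\Lambda^3$; with $\Lambda=\{x=y=0\}$ this reads $f=F_4(x,y)+zG_3(x,y)+wH_3(x,y)$ for forms $F_4,G_3,H_3$ in $x,y$ only, and restricting $f$ to the generic plane $\{\mu x-\lambda y=0\}$ of the pencil through $\Lambda$ splits off $3\Lambda$ plus a residual line varying with $[\lambda:\mu]$, so $\XXf$ is ruled. If $\XXf$ is singular along two skew lines $\Lambda_1=\{x=y=0\}$, $\Lambda_2=\{z=w=0\}$, then $f\in I_{\Lambda_1}^2\cap I_{\Lambda_2}^2$ forces $f$ to be bihomogeneous of bidegree $(2,2)$ in $(x,y)$ and $(z,w)$; restricting $f$ to the transversal through $(a:b:0:0)$ and $(0:0:c:d)$ yields $s^2t^2\,f(a,b,c,d)$, so this transversal lies on $\XXf$ precisely when $f(a,b,c,d)=0$, a bidegree-$(2,2)$ curve in $\PP^1\times\PP^1$, and again the corresponding lines fill $\XXf$. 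In each case, $\XXf$ being irreducible of dimension two, the $1$-parameter family of lines exhausts the surface.

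\emph{Part~(b).} Put $\Pi_0=\{w=0\}$ for the plane of $C$ and $C=\{w=q(x,y,z)=0\}$. Since $\XXf\cap\Pi_0$ is a plane quartic singular along $C$ it equals $2C$; hence (for $C$ reduced) $\XXf$ contains no line inside $\Pi_0$, and every line $\ell\subset\XXf$ meets $C$ in its one point $\ell\cap\Pi_0$. Computing the residual and using that $\XXf$ is singular along $C$ I would bring $f$ to the form $f=Q^2+w^2D$ with $Q$ a quadric restricting to $q$ on $\Pi_0$ and $D$ a quadric; a local expansion at a general $p\in C$ then shows the tangent cone of $\XXf$ at $p$ is a pair of planes $\Pi_1(p),\Pi_2(p)$, each meeting $\Pi_0$ along the tangent $T_pC$, so every line of $\XXf$ through $p$ lies in one of these planes and $\XXf\cap\Pi_i(p)$ is a plane quartic with a triple point at $p$ whose tangent cone has $T_pC$ among its three lines. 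As $p$ runs over $C$ these planes trace a rational curve $\mathcal{T}$ in the space of planes (a bisection of the quadric cone over the dual conic of $C$). For $\Pi\in\mathcal{T}$ over $p$, the lines of $\XXf$ through $p$ inside $\Pi$ are the common roots of the residual cubic and the residual quartic cut out in $\Pi$; cancelling the spurious root recording $T_pC$ (which is not on $\XXf$, as $T_pC\not\subset2C$), this is the simultaneous vanishing of a binary quadratic and a binary quartic depending on the parameter of $\mathcal{T}$, so the number of lines of $\XXf$ is at most the degree in that parameter of their resultant. The hard part will be this degree count, which must come out to be $16$ — essentially Clebsch's classical estimate \cite{Clebsch-1868} — together with checking that the finitely many special points of $C$ (where $\Pi_1(p)=\Pi_2(p)$, or where the triple point degenerates) contribute no further lines. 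Finally I would dispose of the degenerate cases: if $C$ is reducible its two lines are among the $16$, and a quartic ruled by lines is excluded here by the classical classification \cite{top-11}, so $\XXf$ carries only finitely many lines and the bound applies.
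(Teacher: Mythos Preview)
Your arguments for part~(a) are correct and a bit more explicit than the paper's. For the twisted cubic the paper argues even more directly: every point of $\PP^3$ lies on a secant (or tangent) of $C_3$, and such a secant through a point of $\XXf$ meets $\XXf$ with multiplicity at least $2+2+1=5$, hence lies on $\XXf$. For the triple line and the pair of skew lines the paper simply invokes the pencil of planes through (one of) the line(s) and observes that each member carries a residual line; your normal-form computations spell this out. Either way, part~(a) is fine.

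For part~(b) your route diverges from the paper's and is left genuinely incomplete: you set up a resultant along the rational curve $\mathcal T$ of tangent planes, but then concede that ``the hard part will be this degree count'' and defer to Clebsch. The paper bypasses any such computation by a birational reduction. The linear system $|\mathcal I_C(2)|$ of quadrics through the smooth conic $C$ defines a map $\phi\colon\PP^3\dashrightarrow\PP^4$ whose image is a smooth quadric threefold $Q_1$; since $\XXf$ lies in $|\mathcal I_C^2(4)|$, there is a second quadric $Q_2\subset\PP^4$ with $\phi(\XXf)=Q_1\cap Q_2$, and $\phi$ sends lines on $\XXf$ to lines on the del Pezzo surface $Q_1\cap Q_2$, which has at most $16$ lines. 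This explains the number $16$ structurally and takes two lines rather than a delicate degree count. If you want to push your approach through, you would have to parametrize $\mathcal T$ explicitly, keep careful track of the bidegrees of the two binary forms and of the spurious factor coming from $T_pC$, and then still handle the special points of $C$ separately; this is feasible but laborious, and as written it is not a proof.
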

\begin{proof}
a) Observe that if $\XXf$ is singular along a twisted cubic $C_3$, then $\XXf$ is covered by secants of $C_3$. Indeed,
let $P \in \XXf$ be a general (smooth) point, and let $\ell$ be a secant (or tangent) of $C_3$ that runs through the point $P$ (recall that $\PP^3(\CC)$ is the union of  secants and tangents of $C_3$). 
Since $\ell.\XXf \geq 5$, we obtain $\ell \subset \XXf$ and the claim follows.\\
To prove the claim in other cases consider the pencil of hyperplanes that contain the line (resp. one of the lines) of singular points, and observe that each element of the pencil contains another line.

\noindent b) Suppose that $\XXf$ is singular along a smooth conic $C$ and put $\Pi := \mbox{span}(C)$. 
By \cite[$\S$~8.6]{dolgachev}, the system $\left|\caI_C\left(2\right)\right|$ of quadrics in $\mP^3$ induces a rational map
$\phi: \mP^3 \dashrightarrow \mP^4$,
which is not defined at $C$, contracts the plane $\Pi$ to a point $P \in \mP^4$, and is one-to-one outside $\Pi$.
Its image is a smooth quadric threefold $Q_1 \subset \mP^4$. Since $\XXf \in \left|\caI^2_C\left(2\right)\right|$, there exists another 
quadric threefold $Q_2 \subset \mP^4$, such that $\phi(\XXf) = Q_1 \cap Q_2$ (see also \cite[$\S$~7.2.1]{dolgachev}). One can easily check that $\phi$ maps lines on $\XXf$ to lines
 on the Fano surface $Q_1 \cap Q_2$. The latter contains at most $16$ lines (see e.g. \cite[$\S$~8.6.3]{dolgachev}) which yields the claim.
\end{proof}
The above lemma immediately yields the following proposition.

\begin{pro} \label{prop-non-K3--non-isolated-singularities}
Let $\XXf \subset \PP^3(\CC)$ be a non-$K3$ quartic surface. If $\XXf$ is not ruled by lines, then $\XXf$ contains at most $48$ lines.
\end{pro}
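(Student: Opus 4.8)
The plan is to reduce Proposition~\ref{prop-non-K3--non-isolated-singularities} to the two cases already treated in the preceding results, according to whether the non-$K3$ quartic $\XXf$ has isolated or non-isolated singularities. A non-$K3$ quartic surface in $\PP^3(\CC)$ is necessarily singular (a smooth quartic is a $K3$), so exactly one of these two cases occurs, and in each case we simply quote the relevant statement.

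First I would dispose of the isolated-singularity case. If every singular point of $\XXf$ is isolated, then — since $\XXf$ is not ruled by lines — by \cite[$\S$~3]{top-11} none of those points can be a fourfold point (a quartic with a fourfold point is a cone, which we have excluded, and in any case such a surface would be ruled). Hence $\XXf$ satisfies the hypotheses of Lemma~\ref{lem-non-K3-isolated-singularities}, and we conclude directly that $\XXf$ contains at most $48$ lines.

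Next I would handle the case where $\XXf$ has non-isolated singularities, i.e. the singular locus contains a curve. By the classical classification of irreducible quartics with a one-dimensional singular locus, the singular curve is (a component of) one of a short list of possibilities: a line, a conic, a twisted cubic, or a union of such. The hypothesis that $\XXf$ is not ruled by lines immediately excludes, via Lemma~\ref{lem-trivial-cases}.a, the cases of a twisted cubic of double points, a line of triple points, and two skew lines of singular points. If $\XXf$ is singular along a conic, Lemma~\ref{lem-trivial-cases}.b gives at most $16$ lines. The remaining possibility is that $\XXf$ is singular along (at most) a line of double points, together possibly with finitely many further isolated singular points; then Lemma~\ref{prop-double-line} yields at most $35$ lines. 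In every subcase the bound $48$ holds.

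Finally, collecting the cases: in the isolated case we get $\leq 48$, and in the non-isolated case we get $\leq \max\{16,35\} = 35 < 48$; hence $\XXf$ contains at most $48$ lines, proving the proposition. The only point requiring a little care — and the main obstacle — is to make sure the classification of non-isolated singular quartics invoked above is complete, so that Lemma~\ref{lem-trivial-cases} and Lemma~\ref{prop-double-line} genuinely cover all non-$K3$, non-ruled quartics with a curve of singularities; this is where I would cite the classical references (\cite{dolgachev}, \cite{top-11}) precisely, and note that any extra isolated singular points do not affect the arguments of Section~\ref{sect-counting-lines}, which only used the presence of the line of double points.
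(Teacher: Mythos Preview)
Your proof is correct and follows essentially the same route as the paper's: reduce to the isolated case via Lemma~\ref{lem-non-K3-isolated-singularities}, and in the non-isolated case classify the singular curve and apply Lemma~\ref{lem-trivial-cases} together with Lemma~\ref{prop-double-line}. The one place where the paper is sharper is the justification that the singular curve has small degree: rather than appealing to an unspecified ``classical classification'', the paper invokes Bertini (\cite[Thm~6.3.4]{jouanoulou}) to conclude that any reduced curve in $\sing(\XXf)$ has degree at most~$3$, which cleanly feeds into Lemma~\ref{lem-trivial-cases} and makes the case analysis you flagged as ``the main obstacle'' immediate.
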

\begin{proof} By Lemma \ref{lem-non-K3-isolated-singularities}, we can assume that $\XXf$ has non-isolated singularities. It results from  Bertini (see \cite[Thm~6.3.4]{jouanoulou}) that a (reduced) curve contained in $\sing(\XXf)$ has degree at most 3. By Lemma~\ref{lem-trivial-cases} we can assume that $\XXf$ contains a line of double points. Prop.~\ref{prop-double-line} completes the proof.
\end{proof}


Finally, we can prove the main result of this note. Recall, that all quartic surfaces are assumed not to be cones.

\begin{thm} \label{thm-affine}
a) Let $\XXf \subset \PP^3(\CC)$ be a quartic surface with at most isolated singularities, none of which is a fourfold point. Then $\XXf$ contains at most $64$ lines. \\
b) Let $\XXf \subset \CC^3$ be an affine quartic surface that is not ruled by lines.  Then $\XXf$ contains at most $64$ lines.
\end{thm}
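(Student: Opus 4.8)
The plan is to reduce both statements to the $K3$ case, which is already known (at most $64$ lines on complex quartic $K3$ surfaces with at most ADE singularities, by \cite{rs-2012} and \cite{veniani-2013}), together with the non-$K3$ bound of at most $48$ lines established in Proposition~\ref{prop-non-K3--non-isolated-singularities}. For part~(a), first observe that if $\XXf$ is reducible or is ruled by lines then the count is handled by classical results (\cite[$\S$~3]{top-11}), so we may assume $\XXf$ is irreducible with only finitely many lines. Now perform a case split on the singularities of $\XXf$: if all isolated singularities are rational double points (ADE), then the minimal resolution $\widetilde{\XXf}$ is a $K3$ surface and the bound of $64$ follows from \cite{veniani-2013} (lines on $\XXf$ lift to lines, i.e. smooth rational curves of degree one with respect to the polarization, on the $K3$ model). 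If $\XXf$ has at least one non-rational isolated singular point (but no fourfold point, by hypothesis), then $\XXf$ is a non-$K3$ quartic and Proposition~\ref{prop-non-K3--non-isolated-singularities} gives the far stronger bound of $48 \leq 64$.

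For part~(b), the strategy is to projectivize: given an affine quartic $\XXf \subset \CC^3$ that is not ruled by lines, let $\overline{\XXf} \subset \PP^3(\CC)$ denote its projective closure. The key point is that $\overline{\XXf}$ contains at least as many lines as $\XXf$ (every affine line in $\XXf$ extends to a projective line in $\overline{\XXf}$, and distinct affine lines have distinct closures), so it suffices to bound the number of lines on $\overline{\XXf}$. If $\overline{\XXf}$ is ruled by lines, then by Rohn's classification (\cite{top-11}) it is covered by a one-parameter family of lines; one then argues that removing the hyperplane at infinity leaves $\XXf$ still ruled by lines, contradicting the hypothesis — so $\overline{\XXf}$ is not ruled by lines. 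Similarly, $\overline{\XXf}$ cannot have a fourfold point: a quartic with a fourfold point is a cone, hence ruled by lines. Thus $\overline{\XXf}$ falls under the hypotheses of part~(a) once we also dispose of the case of non-isolated singularities — but a quartic with a non-isolated singularity is non-$K3$ and not ruled by lines, so Proposition~\ref{prop-non-K3--non-isolated-singularities} again gives the bound $48$. Applying part~(a) (or Prop.~\ref{prop-non-K3--non-isolated-singularities}) to $\overline{\XXf}$ yields at most $64$ lines on $\overline{\XXf}$, hence on $\XXf$.

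The main obstacle I expect is the bookkeeping in part~(b): one must be careful that an affine quartic which is ``ruled by lines'' in the affine sense really does correspond to a projective closure ruled by lines, and that no lines are lost or spuriously gained under taking closures (in particular, that $\XXf$ not being a cone in the affine sense is compatible with the projective conventions adopted in the paper). A secondary subtlety in part~(a) is making sure the classical statement of \cite{veniani-2013} is quoted in exactly the right generality — namely for all quartic $K3$ surfaces, including those with ADE singularities on the minimal model — and that lines on the singular quartic are in bijection with the appropriate $(-2)$-classes or degree-one rational curves on the resolution. Both of these are routine once set up carefully, and the substantive geometric content has already been carried by Proposition~\ref{prop-non-K3--non-isolated-singularities} and the cited $K3$ results.
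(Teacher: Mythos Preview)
Your proposal is correct and follows essentially the same route as the paper: part~(a) splits into the ADE case (handled by \cite{veniani-2013}) and the non-ADE case (handled by Proposition~\ref{prop-non-K3--non-isolated-singularities}), and part~(b) passes to the projective closure and applies~(a) or Proposition~\ref{prop-non-K3--non-isolated-singularities} according to whether the singularities are isolated. Your additional bookkeeping about ruled closures and fourfold points is a bit more explicit than the paper's terse proof, but the substance is the same.
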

\begin{proof}
a)  If $\XXf$ has at most A-D-E singularities, the claim follows from \cite[Thm~1.1]{veniani-2013}. 
Otherwise, it is not a $K3$-surface, so Prop~\ref{prop-non-K3--non-isolated-singularities} completes the proof. \\
b) Consider the projective closure $\overline{\XXf} \subset \PP^3(\CC)$. If it has at most isolated singularities
we can apply a). Otherwise, we use Prop~\ref{prop-non-K3--non-isolated-singularities}.
\end{proof}

Both bounds are sharp, because the Schur quartic contains $64$ lines (\cite{schur}). The results of Sect.~\ref{sect-isol-sing}
yield the following observation
\begin{obs} \label{obs-much-yields-Q6}
If a non-$K3$ quartic surface $\XXf \subset \PP^3(\CC)$ with isolated singularities and without fourfold points contains more than $31$ lines,
then  $\XXf$ is given by \eqref{eq-q56} and \eqref{eq-q6}. 
\end{obs}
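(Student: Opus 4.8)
The plan is to read Observation~\ref{obs-much-yields-Q6} as a contrapositive repackaging of the results already established in Sections~\ref{sect-preliminaries} and~\ref{sect-isol-sing}, so that no genuinely new computation is required. First I would record the standing facts that make the cited statements applicable: by \cite[$\S$~3]{top-11}, a quartic $\XXf \subset \PP^3(\CC)$ with only isolated singularities, none of which is a fourfold point, contains finitely many lines; moreover such an $\XXf$ is irreducible, since a reduced reducible quartic in $\PP^3(\CC)$ which is not a cone is necessarily singular along a curve. These two remarks ensure that Lemma~\ref{lem-triple-31}, Lemma~\ref{lem-coordinates-isolated}, Proposition~\ref{pro-Q4} and Proposition~\ref{prop-Q5} are all available for $\XXf$.

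Next I would perform a case split according to the worst singularity of $\XXf$. If $\XXf$ has an isolated triple point, then Lemma~\ref{lem-triple-31} bounds the number of lines on $\XXf$ by $31$, contradicting the hypothesis that $\XXf$ carries more than $31$ lines. Hence, since $\XXf$ is not a $K3$ surface — so that not all of its singularities are rational double points — and has no fourfold point, it must possess an isolated double point $O$ which is not of ADE type. Lemma~\ref{lem-coordinates-isolated} then places $\XXf$, up to projective equivalence, in one of the families (Q4), (Q5), (Q6).

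Finally I would eliminate the first two families: Proposition~\ref{pro-Q4} gives at most $20$ lines on a quartic in (Q4) and Proposition~\ref{prop-Q5} at most $27$ lines on a quartic in (Q5), both incompatible with having more than $31$ lines. The only surviving possibility is that $\XXf$ belongs to (Q6), i.e.\ after a coordinate change it is given by \eqref{eq-q56} together with \eqref{eq-q6}, which is precisely the assertion. I do not expect a real obstacle here; the only points deserving an explicit line of justification are the reduction to the irreducible case and the statement that "non-$K3$" (with no fourfold point) forces a non-rational isolated singularity of multiplicity two or three — both of which follow from the facts recalled in Section~\ref{sect-preliminaries} and from \cite{top-11}.
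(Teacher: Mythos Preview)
Your proposal is correct and is exactly the argument the paper has in mind: the observation is presented without proof, merely as a consequence of the bounds in Sections~\ref{sect-preliminaries} and~\ref{sect-isol-sing} (Lemma~\ref{lem-triple-31}, Proposition~\ref{pro-Q4}, Proposition~\ref{prop-Q5}), and you have spelled out precisely that case analysis. The only points you flagged as needing justification---irreducibility and the passage from ``non-$K3$ with no triple or fourfold point'' to ``has a non-rational double point''---are indeed the right ones, and your treatment of them is adequate.
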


\noindent
Finite field computations combined with results of Sect.~\ref{sect-isol-sing} (in particular Ex.~\ref{example-triple-point})
support the following conjecture.
\begin{conj} \label{conj-31}
a) If a non-$K3$ quartic surface $\XXf \subset \PP^3(\CC)$ contains more than $31$ lines, then it is ruled by lines. \\
b) Let  $\XXf \subset \PP^3(\CC)$ be a quartic surface that is not ruled by lines. If $\sing(\XXf) \neq \emptyset$ then $\XXf$ contains
strictly less than $64$ lines. 
\end{conj}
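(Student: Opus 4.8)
The plan is to split the argument along the case division used throughout the paper and to sharpen exactly those bounds that currently exceed the target. For part (a) we may assume $\XXf$ is irreducible and not a cone (reducible quartics and cones contain infinitely many lines, hence are ruled), so in particular $\XXf$ has no fourfold point. If $\sing(\XXf)$ is non-isolated, then by Bertini a reduced curve in $\sing(\XXf)$ has degree at most $3$, and Lemma~\ref{lem-trivial-cases} leaves only two possibilities for a non-ruled $\XXf$: it is singular along a smooth conic, in which case it has at most $16$ lines, or it contains a line of double points. If $\sing(\XXf)$ is isolated, Observation~\ref{obs-much-yields-Q6} reduces us to the family (Q6): a triple point gives at most $31$ lines (Lemma~\ref{lem-triple-31}), and (Q4), (Q5) give at most $20$, resp.\ $27$ (Propositions~\ref{pro-Q4}, \ref{prop-Q5}). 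Thus part (a) reduces to the two assertions: (i) a (Q6) quartic with finitely many lines contains at most $31$ lines, and (ii) a non-ruled quartic containing a line of double points contains at most $31$ lines. For part (b), Proposition~\ref{prop-non-K3--non-isolated-singularities} already gives at most $48<64$ lines on a non-$K3$ non-ruled quartic, so it remains to prove (iii): a quartic $K3$ surface (i.e.\ with at worst ADE singularities) that is singular contains at most $63$ lines.

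For (i) I would refine the rational-elliptic-surface analysis of Lemma~\ref{lem-non-K3-isolated-singularities}. Fixing $\ell_0\subset\XXf$, the minimal genus-one fibration $\phi_{\ell_0}\colon\XXf_{\ell_0}\to\mP^1$ on the rational elliptic surface $\XXf_{\ell_0}$ satisfies $e(\XXf_{\ell_0})=12$ and $\rho(\XXf_{\ell_0})=10$; lines meeting $\ell_0$ are fibre components, lines disjoint from $\ell_0$ are sections. One may then run through the classification of possible reducible-fibre configurations of a rational elliptic surface together with their Mordell--Weil lattices (see e.g.\ \cite{miranda}), and for each configuration bound the number of sections that can be lines by combining the explicit section counts in the extremal cases with the Mordell--Weil rank bound and with Claim~1 (at most $11$ pairwise disjoint lines); the Salmon hypersurface ${\mathcal F}_{\XXf}$ and the residual-conic estimate are then invoked to rule out the finitely many configurations that still formally permit $32$ lines. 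Case (ii) is analogous but uses the genus-zero pencil $|{\mathcal O}_{\XXf}(1)-2\ell_0|$, whose members are conics: here I would run the walk-along-a-conic technique of Lemma~\ref{lem-double-line-residual-conic} for every member of the pencil at once, and exploit that a line meeting $\ell_0$ and a line meeting a residual conic cannot be counted independently, in order to shave the present $16+19=35$ down to $31$.

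For (iii) the strategy is to adapt the lattice-theoretic arguments of \cite{rs-2012}, \cite{veniani-2013} to the singular case. On the minimal resolution $\widetilde\XXf$, a $K3$ surface with $e(\widetilde\XXf)=24$ and $\rho(\widetilde\XXf)\le 20$, the exceptional locus of an ADE point is an ADE configuration of $(-2)$-curves that behaves, with respect to the elliptic pencil $|{\mathcal O}_{\widetilde\XXf}(1)-\widetilde{\ell_0}|$, like additional reducible-fibre components and occupies a corresponding amount of room in $\mathrm{NS}(\widetilde\XXf)$. Concretely, I would first establish that the $64$-line configuration is lattice-rigid — it forces an explicit rank-$20$ sublattice of $\mathrm{NS}$, essentially the one realized by the Schur quartic, with every line met by the maximal possible number of others — and then show that adjoining even a single exceptional $(-2)$-class produces an overlattice that either fails to embed primitively into the $K3$ lattice $U^{\oplus 3}\oplus E_8^{\oplus 2}$ or contradicts the fact that the Schur quartic is the only smooth quartic with $64$ lines; this gives the strict inequality.

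The main obstacle, common to all three parts, is that the generic bookkeeping — Euler number $12$ and Picard rank $10$ for rational elliptic surfaces, the degree of the Salmon hypersurface, Picard rank $\le 20$ for $K3$ surfaces — stops at $32$, $35$ and $64$ respectively, so the last gap must be closed by a genuinely case-by-case study: in (i) and (ii) a finite but non-trivial list of fibre/section configurations eliminated one at a time, and in (iii) the point that the rigidity of the $64$-line configuration is itself a substantial classification statement, so that the cleanliness of the argument is bounded by the available characterization of quartics attaining $64$ lines. I expect (i) — the family (Q6) — to be the decisive case: it is precisely the family isolated in Observation~\ref{obs-much-yields-Q6}, and it is where the finite-field evidence supporting the conjecture is concentrated.
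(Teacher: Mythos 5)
This statement is a \emph{conjecture} in the paper: the authors offer no proof, only the remark that finite-field computations and the partial results of Sect.~\ref{sect-isol-sing} support it. So there is no argument of the paper to compare yours against, and your proposal must stand on its own. It does not: it is a (correct) reduction followed by a program, not a proof. The reductions you carry out are exactly the ones already in the paper --- Lemma~\ref{lem-trivial-cases} and Bertini for non-isolated singularities, Lemma~\ref{lem-triple-31} and Propositions~\ref{pro-Q4}, \ref{prop-Q5} (i.e.\ Observation~\ref{obs-much-yields-Q6}) for isolated ones, and Proposition~\ref{prop-non-K3--non-isolated-singularities} for part (b) --- so the entire new content of the conjecture is concentrated in your three sub-assertions (i), (ii), (iii), and none of them is established.

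Concretely: for (i) you propose to ``run through the classification of reducible-fibre configurations of a rational elliptic surface'' and to invoke the Salmon hypersurface to ``rule out the finitely many configurations that still formally permit $32$ lines,'' but no such enumeration is performed, and it is not clear it would terminate below $31$ --- the paper's own combination of Claims~1--4 with the degree-$80$ cycle $({\mathcal F}_{\XXf}\cdot\XXf)$ only reaches $48$, and the obstruction is not merely bookkeeping: lines disjoint from $\ell_0$ are sections, and the Mordell--Weil group of a non-extremal rational elliptic surface can be infinite, so bounding the number of sections that are \emph{lines on $\XXf$} requires geometric input you do not supply. For (ii) the proposed coupling of the $16$ lines meeting $\ell_0$ with the $19$ lines meeting a residual conic is only described as something that would ``shave $35$ down to $31$''; no mechanism for the interaction is given. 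For (iii) the asserted lattice-rigidity of the $64$-line configuration (that it forces a specific rank-$20$ sublattice realized only by the Schur quartic) is itself a deep classification statement --- it is essentially the content of the uniqueness results in \cite{dis-2015} and \cite{veniani-2013}, not something available inside this paper --- and the step ``adjoining a single exceptional $(-2)$-class fails to embed primitively into $U^{\oplus 3}\oplus E_8^{\oplus 2}$'' is asserted without computation; note that Example~\ref{example-thirtynine} shows singular $K3$ quartics can still carry at least $40$ lines, so the singular $K3$ case is genuinely delicate. In short, the proposal correctly identifies where the difficulty lives but proves nothing beyond what the paper already proves, and the conjecture remains open after it.
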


Finally, recall that the maximal number of lines on quartic hypersurfaces in $\PP^3(\CC)$ with non-empty singular locus (that are not ruled by lines) is unknown.
The example below shows that the number in question is 
attained either by a surface in $(Q6)$ or by a non-smooth $K3$-quartic. 

\begin{ex} \label{example-thirtynine}
One can easily check that the set of singularities of the surface given by
$$
S: x^4 + x z^3 + y^2 z w + x w^3 = 0
$$
consists of  one singular point of type A$_3$ and of three A$_1$-points. 
Studying the lines that meet the plane $\mbox{V}(x)$ one finds that $S$ contains 
exactly $39$ lines.
This is the largest number of lines known on an explicitely given  singular complex quartic surface in $\PP^3(\CC)$ until now. While working on this note we were informed by D.~Veniani
that a Torelli-theorem argument can be applied to show the existence of a complex quartic surface with non-empty singular locus and $40$ lines. 
\end{ex}

\subsection*{Acknowledgement}

We thank Wolf Barth, Alex Degtyarev and Matthias Sch\"utt for helpful discussions.
We are extremely grateful to Duco van Straten for calling our attention to various papers of A.~Clebsch.

\vspace*{1ex}
\noindent
V\'ictor Gonz\'alez-Alonso \\
Institut f\"ur Algebraische Geometrie, Leibniz Universit\"at
   Hannover, Welfengarten 1, 30167 Hannover, Germany \\
gonzalez@math.uni-hannover.de

\vspace*{1ex}
\noindent
S{\l}awomir Rams \\
{\sl Current address:} 
 Institut f\"ur Algebraische Geometrie, Leibniz Universit\"at
   Hannover, Welfengarten 1, 30167 Hannover, Germany \\
{\sl Permanent address:} Institute of Mathematics, Jagiellonian University, 
 ul. {\L}ojasiewicza 6,  30-348 Krak\'ow, Poland \\
slawomir.rams@uj.edu.pl



\begin{thebibliography}{99}






\bibitem{BS}
Boissiere, S., Sarti, A.:
\emph{Counting lines on surfaces},
Ann. Scuola Norm. Sup. Pisa Cl. Sci. {\bf 5} (2007), 39--52.

\bibitem{Clebsch-1868} Clebsch,  A.:  \emph{Ueber die Fl\"achen vierter Ordnung, welche eine Doppelcurve zweiten Grades besitzen.} Journal f\"ur die reine und angewandte Mathematik {\bf 69} (1868), 142-184.      



\bibitem{degtyarev-90}  Degtyarev, A. I.: \emph{Classification of quartic surfaces that have a nonsimple singular point.} Math. USSR-Izv. 35 (1990), no. 3, 607--627.

\bibitem{dis-2015}  Degtyarev, A. I., Itenberg, I., Sinan Sert\"oz A.: \emph{Lines on quartic surfaces.} preprint, Ankara.

\bibitem{dolgachev}   Dolgachev, Igor V.: \emph{Classical algebraic geometry. A modern view.} Cambridge University Press, Cambridge, 2012.

\bibitem{dumnicki}  Dumnicki, M., Harbourne, B., Szemberg, T., Tutaj-Gasi\'nska, H.: \emph{Linear subspaces, symbolic powers and Nagata type conjectures.} Adv. Math. {\bf 252} (2014), 471--491. 

\bibitem{eisenbud-harris}  Eisenbud, D., Harris, J.,  \emph{3264 and all that:
Intersection theory in algebraic geometry} Draft available at
  http://isites.harvard.edu/fs/docs/icb.topic720403.files/book.pdf, 2012.

\bibitem{HT}
Harris, J.,
Tschinkel, Y.:
\emph{Rational points on quartics},
Duke Math.~J.~{\bf 104} (2000),  477--500. 


\bibitem{jouanoulou}  Jouanolou, J.-P.: \emph{ Th\'eoremes de Bertini et applications.}  Progress in Mathematics, 42. Birkh\"auser Boston, Inc., Boston, MA, 1983.

\bibitem{Kollar-2014}  Koll\'ar, J.: \emph{Szemer\'edi-Trotter-type theorems in dimension 3}. Adv. Math. 271 (2015), 30–61.



\bibitem{miranda}   Miranda, R.: \emph{The basic theory of elliptic surfaces.} ETS Editrice, Pisa, 1989.


\bibitem{Miyaoka} Miyaoka, Y.:
\emph{Counting lines and conics on a surface.},
Publ. Res. Inst. Math. Sci.~{\bf 45} (2009), 919--923. 

\bibitem{top-11} Polo-Blanco, I., van der Put, M., Top, J.: \emph{Ruled quartic surfaces, models and classification.} Geom. Dedicata {\bf 150} (2011), 151--180.


\bibitem{rohn-dreifache-punkt}
Rohn, K.: \emph{Ueber die Flächen vierter Ordnung mit dreifachem Punkte.} Math. Ann. {\bf 24} (1884), 55--151. 


\bibitem{rs-2012} Rams, S., Sch\"utt, M.: \emph{64 lines on smooth quartic surfaces.}  to appear in Math. Ann.
(doi: 10.1007/s00208-014-1139-y).



\bibitem{rs-2014}  Rams, S., Sch\"utt, M.:
\emph{112 lines on smooth quartic surfaces (characteristic 3).}  to appear in  Q. J. Math., available as 
arXiv:math/1409.7485.

\bibitem{salmon2}  Salmon, G.: \emph{A treatise on the analytic geometry of three dimensions. Vol. II.} Fifth edition. Longmans and Green, London, 1915.

\bibitem{schur} Schur, F.: \emph{Ueber eine besondre Classe von Fl\"achen vierter Ordnung.} 
Math. Ann. {\bf 20} (1882), 254--296.

\bibitem{segre-43} Segre, B.: \emph{The maximum number of lines lying on a quartic surface.},  Quart. J. Math., Oxford Ser. {\bf 14} (1943), 86--96.



\bibitem{veniani-2013} Veniani, D. C.: \emph{The maximum number of lines lying on a K3 quartic surface}, preprint (2015), arXiv:math/1502.04510.



\bibitem{ctc-wall-99} Wall C.T.C.: \emph{Sextic curves and quartic surfaces with higher singularities.}  preprint 1998, available at  http://www.liv.ac.uk/$\sim$ctcw/Other.html
\end{thebibliography}
\end{document}